\DeclareSymbolFont{bbold}{U}{bbold}{m}{n}
\DeclareSymbolFontAlphabet{\mathbbold}{bbold}
\newcommand{\N}{\mathbb{N}}
\newcommand{\R}{\mathbb{R}}
\newcommand{\C}{\mathbb{C}}
\newcommand{\from}{\colon}
\newcommand{\calL}{\mathcal{L}}
\newcommand{\calF}{\mathcal{F}}
\DeclareMathOperator{\dom}{dom}
\DeclareMathOperator{\Real}{Re}
\DeclareMathOperator{\ran}{ran}
\DeclareMathOperator{\spt}{spt}
\DeclareMathOperator{\ls}{span}
\renewcommand{\Re}{\Real}
\renewcommand{\i}{\mathrm{i}}
\newcommand{\multm}{\mathrm{m}}
\newcommand{\euler}{\mathrm{e}}
\DeclareMathOperator{\divergence}{div}
\renewcommand{\div}{\divergence}
\DeclareMathOperator{\grad}{grad}
\DeclareMathOperator{\sbb}{s_b}
\newcommand\MyPairedDelimiter{%
  \@ifstar{\My@Paired@Delimiter{{}}}
          {\My@Paired@Delimiter{}}%
}
\newcommand\My@Paired@Delimiter[4]{%
  \newcommand#2{%
    \@ifstar{\start@PD{#1}{\delimitershortfall=-1sp}{#3}{#4}}
            {\start@PD{#1}{}{#3}{#4}}%
  }%
}
\newcommand\start@PD[5]{%
  #1\mathopen{\mathpalette\put@delim@helper{\put@delim{#2}{#3}{.}{#5}}}%
  #5%
  \mathclose{\mathpalette\put@delim@helper{\put@delim{#2}{.}{#4}{#5}}}%
}
\newcommand\put@delim@helper[2]{%
  \hbox{$\m@th\nulldelimiterspace=0pt #2#1$}%
}
\newcommand\put@delim[5]{%
  \setbox\z@\hbox{$\m@th#5{#4}$}%
  \setbox\tw@\null
  \ht\tw@\ht\z@ \dp\tw@\dp\z@
  #1#5%
  \left#2\box\tw@\right#3%
}
\MyPairedDelimiter*{\abs}{\lvert}{\rvert}
\MyPairedDelimiter*{\norm}{\lVert}{\rVert}
\MyPairedDelimiter{\set}{\{}{\}}
\providecommand{\scpr}[2]{\left( #1 \,\middle|\, #2 \right)}
\renewcommand{\sp}{\scpr}
\theoremstyle{plain} 
\newtheorem{theorem}{Theorem}[section]
\newtheorem{lemma}[theorem]{Lemma}
\theoremstyle{definition}
\newtheorem{example}[theorem]{Example}
\newtheorem{hypothesis}[theorem]{Hypothesis}
\newtheorem*{definition}{Definition}
\newtheorem{remark}[theorem]{Remark}
\title{Spatial Approximation for Evolutionary Equations}
\author[A.~Buchinger]{Andreas Buchinger\,\orcidlink{0009-0004-4203-5874}}
\address[A.B.]{Technische Universität Hamburg \\
  Institut für Mathematik \\
  Am Schwarzenberg-Campus 3 \\
  D-21073 Hamburg \\
  Germany}
\email{andreas.buchinger@tuhh.de}
\author[C.~Seifert]{Christian Seifert\,\orcidlink{0000-0001-9182-8687}}
\address[C.S.]{Technische Universität Hamburg \\
  Institut für Mathematik \\
  Am Schwarzenberg-Campus 3 \\
  D-21073 Hamburg \\
  Germany}
\email{christian.seifert@tuhh.de}
\author[S.~Trostorff]{Sascha Trostorff\,\orcidlink{0000-0002-5915-5782}}
\address[S.T.]{Christian-Albrechts-Universität zu Kiel \\
Mathematisches Seminar \\
Heinrich-Hecht-Platz 6\\
D-24098 Kiel\\
Germany}
\email{trostorff@math.uni-kiel.de}
\author[M.~Waurick]{Marcus Waurick\,\orcidlink{0000-0003-4498-3574}}
\address[M.W.]{TU Bergakademie Freiberg \\
  Institute of Applied Analysis \\
  Akademiestrasse 6 \\
  D-09599 Freiberg \\
  Germany}
\email{marcus.waurick@math.tu-freiberg.de}
\date{\today}
\begin{document}

\maketitle
\begin{abstract}
We consider evolutionary equations as introduced by R.\ Picard in 2009 and develop a general theory for approximation which can be seen as a theoretical foundation for numerical analysis for evolutionary equations.
To demonstrate the approximation result, we apply it to a spatial discretisation of the heat equation using spectral methods.

   \smallskip
\noindent \textbf{Keywords.}
evolutionary equations, approximation, finite elements, convergence

\smallskip
\noindent \textbf{MSC2020.}
Primary 35F10, 35A35; secondary 35Q99, 35K90, 65M12, 65M60
\end{abstract}

\section{Introduction}
\label{sec:Introduction}

Evolutionary equations describe a Hilbert space-approach to model systems of (time-dependent) partial differential equations (PDEs). They were introduced in the 2000s (\cite{Picard2009,PiMc11}) and provide a well-posedness theorem (see \Cref{thm:PicardWP}) that can be applied uniformly without the necessity to adapt it for each new model. For a detailed introduction into the theory of evolutionary equations and the corresponding established literature, see \cite{PicardMcGheeTrostorffWaurick2020,SeTrWa22}. The theory is shown to cover an ever-growing class of PDEs including classical ones, like the heat equation that we will use as our model example in this paper, but also advanced and partially novel models, e.g., poro- and fractional elasticity, (thermo-)piezoelectric coupling models, or (infinite-dimensional) differential-algebraic equations. We refer to \cite{McPic10,PicTroWau15,Picard17,MPTW16,BuDo24,TroWau2019} for this short and incomplete list. Apart from application to different models, the theory of evolutionary equations is also subject to a constant extension to different fields in the realm of (partial) differential equations. We shall mention abstract boundary data spaces (cf., e.g., \cite{PicTrosWau2016,PicSeiTroWau16}),
maximal regularity (cf.~\cite{PTW17,TW21}), homogenisation (most recently \cite{BEW24,BSW24}), delay-differential equations (cf., e.g., \cite{KPSTW14,FW24,AW24}), and control theory (cf.~\cite{BS24}) as some examples. 
Additionally, evolutionary equations have already been treated numerically in the case of particular problems of mixed type with an emphasis on the time discretisation (see~\cite{FTW2019}), and in the setting of particular homogenisation problems (see~\cite{FrWa18,BFSW24}). A discontinuous Galerkin approach was used in time;
a finite element method in space. Up to now, these numerical approaches are lacking a general background theory in the framework of evolutionary equations that readily yields (a rate of) convergence of the approximations. Thus, we will establish a first version of such a theory for the spatial component in this paper. The treatment of the temporal component and, ultimately, the combination of both components and application to
the (homogenisation) problems and the corresponding approximation schemes in~\cite{FTW2019,FrWa18,BFSW24} are topics of ongoing research. 

Let us now briefly introduce the notion of (autonomous) evolutionary equations and the aim of this paper.
Consider a system of partial differential equations with autonomous coefficients. Let $H$ be the Hilbert space of spatial functions, i.e., in terms of this paper, $L_2(\Omega)^k$ for some $k\in\N$ and a domain $\Omega\subseteq\R^d$, where $d\in\N$.
The solution theory is set in $L_{2,\nu}(\R,H)$. Here, $\nu$ denotes the parameter of an exponential weight in time $\R$ that, for $\nu\neq 0$, renders the weak (time) derivative $\partial_{t,\nu}$ invertible.
Let $M$ be a material law, i.e., a holomorphic function from a certain subset of $\C$ to the bounded operators on $H$ describing the coupling of the system, and let $A$ be a skew-selfadjoint operator on $H$ describing the appearing spatial derivatives. Then, the corresponding evolutionary equation reads
\[(\partial_{t,\nu} M(\partial_{t,\nu}) + A)u = f\]
for some source term $f\in L_{2,\nu}(\R,H)$, where $M(\partial_{t,\nu})$ is defined by means of a holomorphic functional calculus. Imposing usual conditions on $M$, we obtain well-posedness and the solution is given by $u = S_\nu f$, where $S_\nu\coloneqq (\overline{\partial_{t,\nu} M(\partial_{t,\nu}) + A})^{-1}$ is a bounded operator on
$L_{2,\nu}(\R,H)$.
For $n\in\N$, let $H_n$ be Hilbert spaces (in the easiest case, increasing $n$-dimensional subspaces of $H$), $M_n$ a material law, and $A_n$ a skew-selfadjoint operator, all with respect to $H_n$. Consider the evolutionary equations
\[(\partial_{t,\nu} M_n(\partial_{t,\nu}) + A_n)u_n = f_n,n\in\N,\]
for some right-hand sides $f_n\in L_{2,\nu}(\R,H_n)$. For the solutions given by $u_n = S_{n,\nu} f_n$, where $S_{n,\nu}\coloneqq (\overline{\partial_{t,\nu} M_n(\partial_{t,\nu}) + A_n})^{-1}$, we have the following ultimate goal: find suitable conditions that imply convergence of $u_n$ (embedded in $L_{2,\nu}(\R,H)$, cf.~\cite{KP2018,V81,IOS89,KS2003,PS2520,PZ2023}) to $u$ in different topologies such that existing common numerical approximations that have already been applied to evolutionary equations can be embedded in this framework. We note in passing that the first proof of well-posedness of evolutionary equations in \cite{Picard2009} can be viewed as a particular space-time version of the approximation result established here. Moreover, the main abstract homogenisation result in \cite{Wau14} is based on a particular instance of the results established here. Furthermore, we expect applications to problems with underlying spatial domains changing their shape along the lines of \cite{KP2018,PZ2023}, which also form as a source of our inspiration.

Finally, we outline how this paper is organised. In \cref{sec:EvolutionaryEquations}, we introduce the relevant theory concerning evolutionary equations in detail. In \cref{sec:SpatialApproximation}, we obtain the
main convergence results. In \cref{sec:ExHeatEq1d}, we use spectral methods to treat the (autonomous) heat equation with different boundary conditions
in one and in higher dimensions as a first example and a proof of principle.
Finite element methods---in particular for Maxwell's equations (compare also~\cite{ER2020,DES2021,ER2021} and the references therein)---are yet to be treated.

\section{Evolutionary Equations}
\label{sec:EvolutionaryEquations}

In this section, we will recall the needed concepts and theorems of the theory of evolutionary equations (see~\cite{SeTrWa22} for proofs and details).

Our model problem will be the heat equation on some open and bounded $\Omega\subseteq\R^d$ with Dirichlet
boundary conditions and in time $\R$. We formally write
\begin{equation}
\partial_t \theta -\div\grad_0 \theta =Q\text{.}\label{eq:HeatEq1Line}
\end{equation}
In order to write this equation in the evolutionary way and to discuss its well-posedness, we need a few definitions. For $\nu\in\R$ and a Hilbert space $H$, we write $L_{2,\nu}(\R,H)$ for the space of all (equivalence
 classes of) Bochner-measurable $f\colon\R\to H$ with $\euler^{-\nu\cdot}\norm{f(\cdot)}_H\in L_{2}(\R)$,
and we define
\begin{equation*}
\exp(-\nu\multm)\colon
\begin{cases}
\hfill L_{2,\nu} (\R,H) &\to\quad L_{2} (\R,H)\\
\hfill f&\mapsto\quad \euler^{-\nu\cdot}f(\cdot).
\end{cases}
\end{equation*}
In composition with the common unitary Fourier transformation $\calF$ on $L_{2} (\R,H)$, this gives rise to the unitary Fourier--Laplace transformation
\[\calL^H_\nu\coloneqq\calF\exp(-\nu\multm)\colon L_{2,\nu} (\R,H) \to L_2 (\R,H)\text{.}\]

Let
\[\partial_{t,\nu}\coloneqq (\calL^H_\nu)^\ast (\i\multm +\nu)\calL^H_\nu\]
 denote the usual weak (time) derivative on $ L_{2,\nu} (\R,H)$, where $\multm\colon\dom(\multm)\subseteq L_2 (\R,H)\to L_2 (\R,H)$ with $\multm f \coloneqq  (x\mapsto xf(x))$ and maximal domain.
 
 Next, for $O\subseteq\C$ open, we call a holomorphic function
 \[M\colon O\to L(H)\coloneqq\{B\colon H\to H\mid B\text{ linear and bounded}\}\]
 such that $O$ contains $\C_{\Re>\mu}\coloneqq\{z\in\C\mid\Re z>\mu\}$ and $\sup_{z\in\C_{\Re>\mu}}\norm{M(z)}<\infty$ for some $\mu\in\R$, a material law, and we write $\sbb (M)$ for the infimum over all such $\mu$. 
 Assuming $\nu >\sbb(M)$ and using the bounded linear operator
  \begin{equation*}
M(\i\multm+\nu)\colon
\begin{cases}
\hfill L_{2} (\R,H) &\to\quad L_{2} (\R,H)\\
\hfill f&\mapsto\quad M(\i\cdot+\nu)f(\cdot),
\end{cases}
\end{equation*}
 we call
\[ M(\partial_{t,\nu})\coloneqq (\calL^H_\nu)^\ast M(\i\multm+\nu)\calL^H_\nu\in L(L_{2,\nu} (\R,H))\]
 the corresponding material law operator.

Finally, let $A\colon\dom(A)\subseteq H\to H$ be skew-selfadjoint. Then, we readily obtain the following lifted
version to $L_{2,\nu} (\R,H)$ that we will still call $A$; the properties of which are well understood:
\begin{lemma}\label{lemma:LiftedSpatialOperatorA}
Let $H$ be a Hilbert space, $\nu\in\R$, and  $A\colon\dom(A)\subseteq H\to H$ skew-selfadjoint.
Regard $\dom (A)$ as a Hilbert space endowed with the graph inner product induced by $A$.
Then,
 \begin{equation*}
 A\colon
 \begin{cases}
\hfill L_{2,\nu}(\R,\dom(A))\subseteq L_{2,\nu} (\R,H) &\to\quad L_{2,\nu} (\R,H)\\
\hfill f&\mapsto\quad (t\mapsto A(f(t)))
\end{cases}
 \end{equation*}
 is skew-selfadjoint.
\end{lemma}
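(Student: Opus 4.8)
The plan is to verify the standard characterisation of skew-selfadjointness: a densely defined, skew-symmetric operator $B$ on a Hilbert space is skew-selfadjoint if and only if $\ran(1+B)=\ran(1-B)$ equals the whole space (this is the basic criterion for selfadjointness applied to the symmetric operator $\i B$, using $\i B\pm\i=\i(B\pm 1)$). Denote by $B$ the lifted operator from the statement. I would carry out all computations directly in $L_{2,\nu}(\R,H)$; since $B$ acts only in the spatial variable while $\calL^H_\nu$ acts only in time and commutes with the pointwise application of the closed operator $A$, one could alternatively transport everything to $L_2(\R,H)$ first, but this reduction is not needed.

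First I would record that $B$ is densely defined and skew-symmetric. Density follows because $\dom(A)$ is dense in $H$, whence $L_{2,\nu}(\R,\dom(A))$ is dense in $L_{2,\nu}(\R,H)$ (e.g.\ via simple functions). For skew-symmetry, take $f,g\in L_{2,\nu}(\R,\dom(A))$ and use that $A$ acts pointwise together with $A^\ast=-A$ to obtain
\[
  \scpr{Bf}{g} = \int_\R \scpr{Af(t)}{g(t)}\,\euler^{-2\nu t}\,\mathrm{d}t
  = \int_\R \scpr{f(t)}{-Ag(t)}\,\euler^{-2\nu t}\,\mathrm{d}t = -\scpr{f}{Bg},
\]
so that $B\subseteq -B^\ast$.

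Second, I would establish surjectivity of $1\pm B$. Since $A$ is skew-selfadjoint, its spectrum is contained in $\i\R$, so $\pm 1\in\rho(A)$ and $(1\pm A)^{-1}\in L(H)$; moreover $A(1\pm A)^{-1}$ is bounded as well (from $A(1+A)^{-1}=1-(1+A)^{-1}$ and $A(1-A)^{-1}=(1-A)^{-1}-1$). Given $f\in L_{2,\nu}(\R,H)$, the pointwise prescription $g(t)\coloneqq (1\pm A)^{-1}f(t)$ yields a Bochner-measurable function (continuous image of a measurable function), and boundedness of $(1\pm A)^{-1}$ and $A(1\pm A)^{-1}$ gives
\[
  \int_\R \big(\norm{g(t)}_H^2+\norm{Ag(t)}_H^2\big)\,\euler^{-2\nu t}\,\mathrm{d}t \le C\,\norm{f}^2
\]
for a suitable constant $C$, i.e.\ $g\in L_{2,\nu}(\R,\dom(A))=\dom(B)$ with $(1\pm B)g=f$. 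Hence $\ran(1+B)=\ran(1-B)=L_{2,\nu}(\R,H)$, and the criterion yields skew-selfadjointness of $B$.

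The only points requiring genuine care — and thus the main (if modest) obstacle — are the measure-theoretic ones: that applying a fixed bounded operator pointwise preserves Bochner measurability, and that the resulting function indeed lies in the graph-norm space $L_{2,\nu}(\R,\dom(A))$. Everything else is formal. I note that the alternative route of computing $B^\ast$ explicitly is also feasible but less convenient, since identifying $\dom(B^\ast)$ with $L_{2,\nu}(\R,\dom(A^\ast))$ forces one to pass from \enquote{for each fixed $v\in\dom(A)$ and almost every $t$} to \enquote{for almost every $t$ and all $v$}, which needs a separability or measurable-selection argument; the surjectivity route sketched above sidesteps this entirely.
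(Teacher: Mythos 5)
Your proof is correct. Note, however, that the paper itself offers no proof of this lemma: it is stated as a \enquote{well understood} fact, deferring to the blanket reference \cite{SeTrWa22} given at the start of the section, so the comparison can only be with the standard argument in the literature rather than with anything in the paper. That standard argument is genuinely different from yours: one first proves, for an arbitrary densely defined closed operator $A$, that the adjoint of the lifted operator is the lift of $A^\ast$. This is done by identifying graphs: under the canonical identification $L_{2,\nu}(\R,H\oplus H)\cong L_{2,\nu}(\R,H)\oplus L_{2,\nu}(\R,H)$, the graph of the lifted operator is $L_{2,\nu}(\R,\operatorname{graph}(A))$, and taking orthogonal complements commutes with lifting because the pointwise lift of an orthogonal projection is again an orthogonal projection whose range is the lift of the original range; since the graph of the adjoint is the orthogonal complement of the rotated graph, the adjoint identity follows, and skew-selfadjointness of the lift is then immediate from $A^\ast=-A$. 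That route buys the general statement \enquote{adjoint of lift equals lift of adjoint} without any separability or measurable-selection issue --- precisely the statement you rightly flag as delicate to obtain by computing $\dom(B^\ast)$ directly. Your route instead exploits skew-selfadjointness from the outset: $\sigma(A)\subseteq\i\R$ gives $\pm1\in\rho(A)$, so $1\pm B$ is surjective by applying $(1\pm A)^{-1}$ and $A(1\pm A)^{-1}$ pointwise, and the range criterion for the symmetric operator $\i B$ applies. This is shorter and fully self-contained, and the two measure-theoretic points you isolate (pointwise application of a bounded operator preserves Bochner measurability; the pointwise graph-norm bound places $g$ in $L_{2,\nu}(\R,\dom(A))$) are indeed the only ones requiring care --- both are handled correctly.
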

Altogether, these concepts and definitions allow us to
present the autonomous evolutionary equation
\begin{equation}\label{eq:StdAutEvoEqu}
(\partial_{t,\nu} M(\partial_{t,\nu}) + A)u = f
\end{equation}
for suitable $u,f\in L_{2,\nu} (\R,H)$, and to formulate the following essential well-posedness theorem.
\begin{theorem}[Picard's Well-Posedness Theorem]\label{thm:PicardWP}
Let $H$ be a Hilbert space, $A\colon\dom(A)\subseteq H\to H$ skew-selfadjoint, $M$ a material law, 
$\nu>\sbb(M)$, and $c>0$ such that
\[\Re \sp{h}{zM(z)h}_H\geq c\norm{h}^2_H\]
for all $h\in H$ and $z\in\C$ with $\Re z\geq \nu$.

Then, $\partial_{t,\mu} M(\partial_{t,\mu})+A$ is closable with bounded inverse
$S_\mu\coloneqq (\overline{\partial_{t,\mu} M(\partial_{t,\mu}) + A})^{-1}\in L(L_{2,\mu} (\R,H))$
and $\norm{S_\mu}\leq 1/c$ for all $\mu\geq\nu$. In particular, $S_\mu$ is the material law operator corresponding to the material law
$z\mapsto (zM(z)+A)^{-1}$ defined for $z\in\C$ with $\Re z\geq\nu$.

Moreover, $\spt f\subseteq [a,\infty)$ for $a\in\R$ and $f\in L_{2,\mu} (\R,H)$ implies $\spt S_\mu f\subseteq [a,\infty)$. For $\mu_1,\mu_2\geq\nu$, we have $S_{\mu_1}f=S_{\mu_2}f$ for all
$f\in  L_{2,\mu_1} (\R,H)\cap L_{2,\mu_2} (\R,H)$. Finally, $f\in\dom(\partial_{t,\mu})$ yields $S_\mu f\in \dom(\partial_{t,\mu})\cap\dom (A)$, i.e., due to $ M(\partial_{t,\mu})\partial_{t,\mu}\subseteq \partial_{t,\mu} M(\partial_{t,\mu})$, the solution $u\coloneqq S_\mu f$ solves the evolutionary equation in the literal sense of~\labelcref{eq:StdAutEvoEqu}. 
\end{theorem}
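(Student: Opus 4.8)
The plan is to diagonalise the problem in time by means of the Fourier--Laplace transformation and reduce everything to a pointwise (in the spectral parameter) statement on $H$. Since $A$ acts pointwise in time, it commutes with $\calL^H_\mu$ by \Cref{lemma:LiftedSpatialOperatorA}, while $\partial_{t,\mu}$ is transformed into multiplication by $\i\multm+\mu$ and $M(\partial_{t,\mu})$ into multiplication by $M(\i\multm+\mu)$. Hence $\partial_{t,\mu}M(\partial_{t,\mu})+A$ is unitarily equivalent to the fibred operator that acts, for the spectral value $z=\i t+\mu$, as $zM(z)+A$ on the fibre $H$. The heart of the argument is therefore the pointwise claim: for every $z\in\C$ with $\Re z\geq\nu$ the operator $zM(z)+A\colon\dom(A)\to H$ is boundedly invertible with $\norm{(zM(z)+A)^{-1}}\leq 1/c$.

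First I would establish this pointwise invertibility. Using skew-selfadjointness of $A$, which gives $\Re\sp{h}{Ah}_H=0$ on $\dom(A)$, the positivity hypothesis yields
\[\Re\sp{h}{(zM(z)+A)h}_H=\Re\sp{h}{zM(z)h}_H\geq c\norm{h}_H^2,\]
so by Cauchy--Schwarz $\norm{(zM(z)+A)h}_H\geq c\norm{h}_H$; as $A$ is closed and $zM(z)$ is bounded, $zM(z)+A$ is closed with closed range. Observing that $\Re\sp{h}{B^\ast h}_H=\Re\sp{h}{Bh}_H$ and $A^\ast=-A$, the adjoint $(zM(z))^\ast-A$ satisfies the same lower bound, hence is injective; thus $zM(z)+A$ has dense range and is boundedly invertible with the claimed bound. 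I would then check that $z\mapsto(zM(z)+A)^{-1}$ is holomorphic on $\C_{\Re>\nu}$ (a Neumann-series argument around each point, factoring out $z_0M(z_0)+A$) and uniformly bounded by $1/c$ there, so it is a material law with $\sbb\leq\nu$. Its material law operator is by definition $S_\mu$, and unitarity of $\calL^H_\mu$ together with the bound on the multiplier gives $\norm{S_\mu}\leq 1/c$.

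Next I would identify $S_\mu$ as the inverse of the closure. Since $S_\mu$ is bounded and, fibrewise, injective with dense range, $S_\mu^{-1}$ is a closed, densely defined operator. Transferring the fibrewise identity $(zM(z)+A)^{-1}(zM(z)+A)h=h$ through $\calL^H_\mu$ shows $\partial_{t,\mu}M(\partial_{t,\mu})+A\subseteq S_\mu^{-1}$ on the core $\dom(\partial_{t,\mu})\cap\dom(A)$, whence $\overline{\partial_{t,\mu}M(\partial_{t,\mu})+A}\subseteq S_\mu^{-1}$; conversely, verifying $S_\mu g\in\dom(\partial_{t,\mu})\cap\dom(A)$ with $(\partial_{t,\mu}M(\partial_{t,\mu})+A)S_\mu g=g$ for $g$ in a dense set makes the closure surjective, and injectivity of $S_\mu^{-1}$ forces $\overline{\partial_{t,\mu}M(\partial_{t,\mu})+A}=S_\mu^{-1}$, i.e.\ closability together with $S_\mu=(\overline{\partial_{t,\mu}M(\partial_{t,\mu})+A})^{-1}$. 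I expect this bookkeeping of domains and closures across the transform to be the main obstacle, as it is the only place where the unboundedness of both $\partial_{t,\mu}M(\partial_{t,\mu})$ and $A$ must be reconciled.

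The remaining assertions follow from the material law structure just obtained. Causality, i.e.\ $\spt f\subseteq[a,\infty)\Rightarrow\spt S_\mu f\subseteq[a,\infty)$, is the standard Paley--Wiener property of material law operators whose symbol is holomorphic and bounded on a right half-plane; the consistency $S_{\mu_1}f=S_{\mu_2}f$ on $L_{2,\mu_1}(\R,H)\cap L_{2,\mu_2}(\R,H)$ is the standard $\mu$-independence of material law operators sharing the $\mu$-independent symbol $z\mapsto(zM(z)+A)^{-1}$. Finally, for $f\in\dom(\partial_{t,\mu})$, commutation of the material law operator $S_\mu$ with $\partial_{t,\mu}$ gives $S_\mu f\in\dom(\partial_{t,\mu})$, and then $AS_\mu f=f-\partial_{t,\mu}M(\partial_{t,\mu})S_\mu f\in L_{2,\mu}(\R,H)$ puts $S_\mu f\in\dom(A)$, so $u=S_\mu f$ solves \labelcref{eq:StdAutEvoEqu} in the literal sense.
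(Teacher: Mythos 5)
Your proposal is correct, and it takes essentially the same approach as the proof the paper relies on: the paper does not prove \Cref{thm:PicardWP} itself but cites \cite[Theorem~6.2.1]{SeTrWa22}, whose proof is exactly your argument---Fourier--Laplace diagonalisation, pointwise bounded invertibility of $zM(z)+A$ via accretivity of $zM(z)$ combined with skew-selfadjointness of $A$ (and injectivity of the adjoint $(zM(z))^{\ast}-A$ for dense range), holomorphy and the uniform bound $1/c$ of the resolvent symbol making it a material law, and identification of the closure of $\partial_{t,\mu}M(\partial_{t,\mu})+A$ with $S_\mu^{-1}$ by verifying the fibrewise identities on the dense set $\dom(\partial_{t,\mu})$. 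The remaining assertions (causality, independence of $\mu$, and the regularity statement) are, as you say, standard consequences of the material law structure of $S_\mu$.
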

\begin{proof}
See, e.g.,~\cite[Theorem~6.2.1]{SeTrWa22}.
\end{proof}
Going back to~\labelcref{eq:HeatEq1Line}, we set the spatial Hilbert space $H\coloneqq L_2(\Omega)\times L_2(\Omega)^d$, the coupling of the system via the material law
$z\mapsto M(z) \coloneqq \begin{psmallmatrix}1 & 0\\0 & 0\end{psmallmatrix} + z^{-1}\begin{psmallmatrix}0 & 0\\0 & 1\end{psmallmatrix}$ for
$z\in\C$ with $\Re z> 0$, and the spatial differential operators via the skew-selfadjoint unbounded operator $A\coloneqq\begin{psmallmatrix} 0 & \div\\\grad_0 & 0\end{psmallmatrix}$. Here, $\div$ is the weak divergence on $L_2(\Omega)^d$ with its maximal domain, and $\grad_0$ is the closure of the gradient on the test functions $C_c^\infty(\Omega)\subseteq L_2(\Omega)$ with respect to the graph norm, i.e., the weak gradient on $L_2(\Omega)$
with domain $H^1_0(\Omega)$. By means of these definitions, we consider~\labelcref{eq:HeatEq1Line} as the evolutionary equation
 \begin{equation}\label{eq:HeatEqEvolEq}
 \Bigl(\partial_{t,\nu} \begin{pmatrix}1 & 0\\0 & 0\end{pmatrix} + \begin{pmatrix}0 & 0\\0 & 1\end{pmatrix} + \begin{pmatrix} 0 & \div\\\grad_0 & 0\end{pmatrix}\Bigr) \begin{pmatrix}\theta\\q \end{pmatrix} = \begin{pmatrix} Q\\0 \end{pmatrix}
 \end{equation}
for $\nu>0$ and a source $Q\in L_{2,\nu}(\R, L_2(\Omega))$. We can easily verify that
\Cref{thm:PicardWP} can be applied to~\labelcref{eq:HeatEqEvolEq}, see also \Cref{sec:ExHeatEq1d}.
\section{Spatial Approximation}
\label{sec:SpatialApproximation}

In this section, we will consider spatial approximations of evolutionary equations. We will rely on the notation introduced in Section \ref{sec:EvolutionaryEquations}. Before we dive into the particulars of our setting, we provide one particular setting we have in mind.
\begin{remark}\label{rem:OrthProJCanInjIntro}
Consider an (increasing) sequence of finite-dimensional subspaces $(H_n)_n$ of a Hilbert space $H$ such that $\bigcup_n H_n$ is dense in $H$. Then, $n\in\N$, let $P_n\from H\to H_n$ be the corresponding orthogonal projections and $J_n\from H_n\to H$ the corresponding canonical embeddings. Then,
\[\norm{P_n} = \norm{J_n} = 1,\]
and $P_nJ_n = I_{H_n}$ for all $n\in\N$. The density assumption yields $J_nP_n\to I$ in the strong operator topology. 
Note that this setting is remindful of a Galerkin approximation, where the sequence $(H_n)_{n\in\N}$ is indeed usually constructed to be increasing.
\end{remark}
We will now provide a more general framework and convergence results that will cover first numerical approximations schemes in the context of evolutionary equations.

For the remaining section we will make the following assumptions.
\begin{hypothesis}
\label{hyp:Pn}
For $n\in\N$ let $H$ and $H_n$ be Hilbert spaces,
$M$ and $M_n$  material laws corresponding to $H$ and $H_n$ respectively, and
$A\colon\dom(A)\subseteq H\to H$ and $A_n\colon\dom(A_n)\subseteq H_n\to H_n$ skew-selfadjoint operators.

Furthermore, for $n\in\N$, regard $\dom(A)$ and $\dom(A_n)$ as Hilbert spaces endowed with the respective graph inner product and let $D\subseteq\dom (A)$ be a dense subspace of $H$. Assume that $D$, endowed with some inner product, is a Hilbert space that continuously embeds into $\dom(A)$.

For $n\in\N$, let $P_n\in L(D,\dom(A_n))$ and $J_n\in L(\dom(A_n),\dom(A))$ such that
 \begin{equation}\label{eq:ConvJnPnStronglyToI}
\sup_{n\in\N}\norm{P_n}<\infty\quad\text{ and }\quad\sup_{n\in\N}\norm{J_n}<\infty\quad\text{ and }\quad J_n P_n f\to f
 \end{equation}
  for all $f\in D$ as $n\to \infty$, where the convergence holds with respect to the graph norm of $A$. Furthermore, for each $n\in\N$, let there be an extension $\tilde{J}_n\in L(H_n,H)$ of
$J_n$ such that
\[\sup_{n\in\N}\norm{\tilde{J}_n}<\infty\text{.}\]

Assume there exists $\nu_0\in \R$ such that
\[\sbb(M), \sup_{n\in\N}\sbb(M_n) \leq \nu_0.\]

Moreover, let $c>0$ such that for all $n\in\N$ and $z\in\C$ with $\Re z>\nu_0$ we have
\begin{equation}\label{eq:CoerciveMnAndMmeansStability}
\forall h\in H:\Re \sp{h}{zM(z)h}_H\geq c\norm{h}^2_H\quad\text{ and }\quad \forall h\in H_n:\Re \sp{h}{zM_n(z)h}_{H_n} \geq c\norm{h}^2_{H_n}.
\end{equation}

Finally, we will additionally assume that for each $z\in\C$ with $\Re z >\nu_0$ there exists a $d_z>0$ with
\begin{equation}
\label{eq:Mn_uniformly_bounded}
    \sup_{n\in\N} \norm{zM_n(z)} \leq d_z,
\end{equation}
i.e., that $(z\mapsto zM_n(z))_{n\in\N}$ is a pointwise bounded sequence.
\end{hypothesis}
\begin{definition}
     Let $T_n\in L(H_n)$, $T\in L(H)$, $0\in \rho(T+A)$ and, recalling that $D\subseteq \dom(A)$, assume $(T+A)^{-1}(D)\subseteq D$. We say that 
    \begin{itemize}
     \item \emph{$(T_n+A_n)_{n\in\N}$ converges strongly to $T+A$ on $D$} if
     \[\norm{P_n (T+A)f- (T_n+A_n) P_n f}_{H_n}\to 0\]
     for all $f\in (T+A)^{-1}(D)$ as $n\to\infty$.
     \item \emph{$(T_n+A_n)_{n\in\N}$ converges compact-weakly to $T+A$ on $D$} if
     \[\sp{g_n}{P_n (T+A)f-(T_n+A_n)P_n f}_{H_n}\to 0\]
     for all $f\in (T+A)^{-1}(D)$ as $n\to\infty$ where $g_n\in \dom (A_n)$ for $n\in\N$ and $\sup_{n\in\N}\norm{g_n}_{\dom (A_n)}<\infty$.
    \end{itemize}
\end{definition}
\begin{remark}
Since we will ultimately ask for convergence of the material laws with respect to these modified versions of the weak and strong operator topology,
we need to assume~\labelcref{eq:Mn_uniformly_bounded} instead of deducing this property by virtue of the uniform boundedness principle.
\end{remark}
\begin{remark}\label{rem:JTAPstrongConvFollows}
In the setting of the previous definition, we can make the following reformulation.

Let $(T_n+A_n)_{n\in\N}$ converge strongly to $T+A$ on $D$.
Then, for $f\in (T+A)^{-1}(D)$,
    \begin{multline*}
    \norm{(T+A)f-\tilde{J}_n(T_n+A_n)P_n f}_{H}\\
    \leq \sup_{n\in\N}\norm{\tilde{J}_n}\norm{P_n (T+A)f- (T_n+A_n) P_n f}_{H_n}+\norm{(T+A)f-J_nP_n(T+A)f}_H\to 0
    \end{multline*}
    as $n\to \infty$.
    
    Note that we are not able to provide an analogous statement for compact-weak convergence since we do not assume anything concerning the mappings
    $\tilde{J}_n^\ast\restriction_{\dom(A)}$ at this point. In contrast, see \Cref{rem:generalization_lem:isem23_14.1.2_weak}. 
\end{remark}
\begin{remark}\label{rem:UniformCompEmbAn}
The name of the previous definition of compact-weak convergence comes from the fact that it resembles a certain \enquote{uniform compact embedding} of the domains $\dom(A_n), n\in\N$. The standard case in applications will
be $H_n\subseteq H$ and $\dom(A_n)\subseteq \dom(A)$, where $\dom(A)$ as a Hilbert space embeds compactly into $H$. Then, we can define \emph{weak convergence of $(T_n+A_n)_{n\in\N}$
to $T+A$ on $D$} as
 \begin{equation}\label{eq:WOTConvOnDSpCase}
 \sp{g}{P_n (T+A)f-(T_n+A_n)P_n f}_{H}\to 0
 \end{equation}
     for all $f\in (T+A)^{-1}(D)$ and $g\in H$ as $n\to\infty$. By the principle of uniform boundedness and since any bounded sequence in $\dom(A)$ has a (strongly) convergent subsequence in $H$, this implies compact-weak convergence (compare also \Cref{lem:weak-strong_weak}). Alternatively, we could restrict~\labelcref{eq:WOTConvOnDSpCase} to $g\in\dom(A)$ and instead ask for $\sup_{n\in\N}\norm{P_n (T+A)f-(T_n+A_n)P_n f}_{H}<\infty$ for each $f\in (T+A)^{-1}(D)$.

 For an in-depth discussion on general weak convergence for sequences in varying Hilbert spaces, see, e.g.,~\cite[Section~2.2]{KS2003}.
\end{remark}
With that, we obtain the following first central lemma, which is related to~\cite[Lemma~14.1.2]{SeTrWa22}. Before, however, providing the lemma, a standard invertibility result is presented. The proof of which can be found in \cite[Chapter 6]{SeTrWa22}.
\begin{lemma}\label{lem:wp_easy} Let $T\in L(H)$, $A$ a skew-selfadjoint operator in $H$. If there is $c>0$ such that
\[
 \Re \sp{h}{T h}_{H}\geq c\norm{h}^2_{H}\quad (h\in H)
\]
then $0\in \rho(T+A)$.
\end{lemma}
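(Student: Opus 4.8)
The statement to prove is \Cref{lem:wp_easy}: given $T\in L(H)$ and a skew-selfadjoint operator $A$ in $H$ with the coercivity estimate $\Re\sp{h}{Th}_H\geq c\norm{h}^2_H$ for all $h\in H$, we conclude $0\in\rho(T+A)$.

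\bigskip

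The plan is to establish invertibility of $T+A$ by verifying the two hallmarks of a boundedly invertible closed operator: a uniform lower bound (injectivity with bounded inverse on the range, plus closed range) and surjectivity. First I would observe that the natural domain of $T+A$ is $\dom(A)$, since $T$ is everywhere defined and bounded; moreover $T+A$ is closed because $A$ is closed and $T$ is a bounded perturbation. The key computation is the coercivity estimate on $T+A$: for $h\in\dom(A)$, I would compute
\[
 \Re\sp{h}{(T+A)h}_H = \Re\sp{h}{Th}_H + \Re\sp{h}{Ah}_H.
\]
Since $A$ is skew-selfadjoint, $\sp{h}{Ah}_H$ is purely imaginary, so its real part vanishes; hence $\Re\sp{h}{(T+A)h}_H = \Re\sp{h}{Th}_H \geq c\norm{h}^2_H$. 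By Cauchy--Schwarz this yields $\norm{(T+A)h}_H\geq c\norm{h}_H$, giving injectivity of $T+A$ together with a bounded inverse on its range and, since $T+A$ is closed, closedness of the range.

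\bigskip

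For surjectivity I would run the same estimate on the adjoint. The adjoint of $T+A$ is $T^\ast + A^\ast = T^\ast - A$ (using $A^\ast = -A$), again a bounded perturbation of a skew-selfadjoint operator on domain $\dom(A)$. The crucial point is that the coercivity estimate transfers to $T^\ast$: taking real parts, $\Re\sp{h}{T^\ast h}_H = \Re\overline{\sp{h}{Th}_H} = \Re\sp{h}{Th}_H \geq c\norm{h}^2_H$, and the skew-adjoint term $-A$ again contributes nothing to the real part, so $\Re\sp{h}{(T^\ast-A)h}_H\geq c\norm{h}^2_H$ for $h\in\dom(A)$. This gives $(T+A)^\ast = T^\ast - A$ the same lower bound $\norm{(T+A)^\ast h}_H\geq c\norm{h}_H$, hence $(T+A)^\ast$ is injective. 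Consequently $\ran(T+A)$, which is closed and equals ${(\ker (T+A)^\ast)}^\perp$, is all of $H$; so $T+A$ is bijective with $\norm{(T+A)^{-1}}\leq 1/c$, i.e.\ $0\in\rho(T+A)$.

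\bigskip

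I do not expect any serious obstacle here: the result is standard and the whole argument hinges on the single observation that skew-selfadjointness of $A$ makes the real part of $\sp{h}{Ah}_H$ vanish, leaving the coercivity of $T$ untouched under the perturbation. The only point demanding mild care is the adjoint computation---confirming that the domain of $(T+A)^\ast$ is exactly $\dom(A)$ and that $(T+A)^\ast = T^\ast - A$, which is immediate because adding a bounded everywhere-defined operator does not alter the adjoint's domain. With both $T+A$ and its adjoint bounded below by the same constant $c$, bounded invertibility follows from the standard closed-range characterisation, completing the proof.
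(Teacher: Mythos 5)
Your proof is correct and complete, and it is essentially the argument the paper relies on: the paper itself only cites \cite[Chapter~6]{SeTrWa22} for this lemma, and the proof given there is exactly your scheme --- skew-selfadjointness kills the real part of $\sp{h}{Ah}_H$, so both $T+A$ and its adjoint $T^\ast - A$ inherit the coercivity bound, giving injectivity, closed range, and hence bijectivity with $\norm{(T+A)^{-1}}\leq 1/c$. No gaps; the two points you flag (closedness of $T+A$ and the identity $(T+A)^\ast = T^\ast + A^\ast$ for bounded $T$) are handled correctly.
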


\begin{lemma}
\label{lem:isem23_14.1.2}
	For $n\in\N$, let $T_n\in L(H_n)$, $T\in L(H)$, and assume that there exist $c,d>0$ with
	\[\norm{T_n}\leq d\quad\text{ and }\quad\Re \sp{h}{T_n h}_{H_n}\geq c\norm{h}^2_{H_n}\]
	 for all $n\in\N$ and $h\in H_n$ as well as
	 \begin{equation}\label{eq:CoerLimitTStrongImplStrong}
	 \Re \sp{h}{T h}_{H}\geq c\norm{h}^2_{H}
	 \end{equation}
	 for all $h\in H$. Then, $0\in\bigcap_{n\in\N}\rho(T_n+A_n)\cap \rho(T+A)$. 
	 
	 Next, let $D\subseteq \dom(A)$ such that $(T+A)^{-1}(D)\subseteq D$ and assume that $(T_n+A_n)_{n\in\N}$ converges strongly to $T+A$ on $D$.
	Then, 
	\[J_n(T_n+A_n)^{-1}P_n \to (T+A)^{-1} \]
	in the strong operator topology on $L(D,\dom(A))$ as $n\to\infty$.
\end{lemma}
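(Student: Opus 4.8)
The plan is to first settle the invertibility assertion and then reduce the convergence claim, via a two-term splitting, to the defining property of strong convergence on $D$ together with the uniform resolvent bounds.

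For the invertibility, for each $n$ the coercivity $\Re\sp{h}{T_n h}_{H_n}\ge c\norm{h}^2_{H_n}$ together with the skew-selfadjointness of $A_n$ lets me invoke \Cref{lem:wp_easy} to get $0\in\rho(T_n+A_n)$, and likewise \eqref{eq:CoerLimitTStrongImplStrong} gives $0\in\rho(T+A)$. Writing $S_n\coloneqq(T_n+A_n)^{-1}$ and $S\coloneqq(T+A)^{-1}$, I would also record the uniform bound $\norm{S_n}\le 1/c$: since $\Re\sp{h}{A_n h}_{H_n}=0$, one has $\Re\sp{h}{(T_n+A_n)h}_{H_n}\ge c\norm{h}^2_{H_n}$, so Cauchy--Schwarz yields $\norm{(T_n+A_n)h}_{H_n}\ge c\norm{h}_{H_n}$ and hence $\norm{S_n}\le 1/c$ (the same argument gives $\norm{S}\le 1/c$).

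For the convergence, fix $g\in D$; since $(T+A)^{-1}(D)\subseteq D$ we have $Sg\in D$. I would then split
\[
J_n S_n P_n g - Sg = J_n\bigl(S_n P_n g - P_n S g\bigr) + \bigl(J_n P_n S g - Sg\bigr),
\]
where the second summand tends to $0$ in the graph norm of $A$ directly by \eqref{eq:ConvJnPnStronglyToI} applied to $f=Sg\in D$. Because $\sup_n\norm{J_n}_{L(\dom(A_n),\dom(A))}<\infty$, controlling the first summand reduces to showing $w_n\coloneqq S_n P_n g - P_n Sg\to 0$ in the graph norm of $A_n$. Applying the definition of strong convergence of $(T_n+A_n)_n$ to $T+A$ on $D$ with $f=Sg$ (so $(T+A)f=g$) and setting $r_n\coloneqq P_n g-(T_n+A_n)P_n Sg$, this definition reads precisely $\norm{r_n}_{H_n}\to 0$. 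Since $P_n Sg\in\dom(A_n)$, applying $S_n$ gives the identity $w_n=S_n r_n$, so $\norm{w_n}_{H_n}\le (1/c)\norm{r_n}_{H_n}\to 0$; and from $A_n w_n=A_n S_n r_n=r_n-T_n S_n r_n$ I obtain $\norm{A_n w_n}_{H_n}\le(1+d/c)\norm{r_n}_{H_n}\to 0$. Hence $\norm{w_n}_{\dom(A_n)}\to 0$, and the splitting yields the claim.

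The delicate point---and the one I expect to be the main obstacle---is that the definition of strong convergence only controls quantities in the $H_n$-norm, whereas the assertion is convergence in the stronger graph norm of $A$. The resolution is the observation that $w_n=S_n r_n$ is a resolvent image, so the identity $A_n w_n=r_n-T_n S_n r_n$ upgrades $H_n$-smallness of $r_n$ to $H_n$-smallness of $A_n w_n$; this is exactly where the uniform bounds $\norm{T_n}\le d$ and $\norm{S_n}\le 1/c$ are indispensable. The remaining, routine ingredient is that $J_n$ is bounded between the graph-norm spaces, which transfers graph-norm smallness of $w_n$ through $J_n$.
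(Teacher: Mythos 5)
Your proof is correct and follows essentially the same route as the paper: your splitting $J_nS_nP_ng-Sg=J_nS_nr_n-(I-J_nP_n)Sg$ with $r_n=\bigl(P_n(T+A)-(T_n+A_n)P_n\bigr)Sg$ is exactly the paper's identity~\labelcref{eq:RearrangeConvResWSImpl}, and your upgrade of $H_n$-smallness of $r_n$ to graph-norm smallness of $S_nr_n$ via $A_nS_n=I-T_nS_n$ is precisely how the paper obtains its uniform bound~\labelcref{eq:UniformBoundTnAnRes}. The only (immaterial) difference is that you spell out the constants $1/c$ and $1+d/c$ explicitly rather than quoting a single combined graph-norm bound.
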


\begin{proof}
	The invertibility statement follows from \Cref{lem:wp_easy}. By virtue of
	$A_n(T_n+A_n)^{-1}=I-T_n(T_n+A_n)^{-1}$,
	\begin{equation}\label{eq:UniformBoundTnAnRes}
	\sup_{n\in\N} \norm{(T_n+A_n)^{-1}}_{L(H_n,\dom(A_n))}\leq \frac{\sqrt{1+c^2+d^2}}{c},
	\end{equation}
	and in particular, we obtain that $J_n(T_n+A_n)^{-1}\in L(H_n,\dom (A))$ is uniformly bounded.

 Now, let $f\in D$ and compute
	\begin{equation}\label{eq:RearrangeConvResWSImpl}
		 \begin{split}
		 J_n(T_n+A_n)&^{-1}P_n f - (T+A)^{-1}f\\
		& = \bigl(J_n(T_n+A_n)^{-1}P_n(T+A) - (J_nP_n+I-J_nP_n)\bigr)(T+A)^{-1}f\\
		& = \bigl(J_n(T_n+A_n)^{-1}P_n(T+A) - J_nP_n\bigr)(T+A)^{-1}f - (I-J_nP_n)(T+A)^{-1}f\\
		& = J_n(T_n+A_n)^{-1}\bigl(P_n(T+A) -(T_n+A_n)P_n\bigr)(T+A)^{-1}f - (I-J_nP_n)(T+A)^{-1}f.
	\end{split}
	\end{equation}
	Thus, the claimed convergence follows since $(J_n(T_n+A_n)^{-1})_{n\in\N}$ is uniformly bounded from $H_n$ to $\dom(A)$ and $J_nP_n(T+A)^{-1}f\to (T+A)^{-1}f$ in $\dom (A)$.
\end{proof}
\begin{remark}[Convergence Rates]
	In the setting of \Cref{lem:isem23_14.1.2}, let $g\from \N\to [0,\infty)$ such that 
	\[\norm{f-J_nP_nf}_{\dom(A)}, \norm{P_n(T+A)f-(T_n+A_n)P_nf}_{H_n}\leq g(n) \norm{f}_D\]
	 for all $f\in (T+A)^{-1}(D)$ and all $n\in\N$. Then, by~\labelcref{eq:RearrangeConvResWSImpl},
	\[\norm{J_n(T_n+A_n)^{-1}P_nf - (T+A)^{-1}f}_{\dom(A)} \leq  \Bigl(\sup_{n\in\N}\norm{J_n}\frac{\sqrt{1+c^2+d^2}}{c}+1\Bigr) g(n) \norm{(T+A)^{-1}f}_D\]
	for all $f\in D$ and all $n\in\N$.
	
	Furthermore, let $h\from \N\to [0,\infty)$ such that 
	\[\norm{f-J_nP_nf}_H, \norm{P_n(T+A)f-(T_n+A_n)P_nf}_{H_n}\leq h(n) \norm{f}_D\]
	 for all $f\in (T+A)^{-1}(D)$ and all $n\in\N$. Then, by~\labelcref{eq:RearrangeConvResWSImpl},
	\[\norm{J_n(T_n+A_n)^{-1}P_nf - (T+A)^{-1}f}_H \leq \Bigl(\sup_{n\in\N}\norm{\tilde{J}_n}\frac{1}{c}+1\Bigr) h(n) \norm{(T+A)^{-1}f}_D\]
	for all $f\in D$ and all $n\in\N$.
\end{remark}
\begin{remark}
Under the assumptions of \Cref{lem:isem23_14.1.2}, we obtain
\begin{multline*}
\norm{(T_n+A_n)^{-1}P_n f -P_n(T+A)^{-1}f}_{H_n}\\
\leq \frac{1}{c}\norm{P_n(T+A)(T+A)^{-1}f -(T_n+A_n)P_n (T+A)^{-1}f}_{H_n}\to 0
\end{multline*}
for $f\in D$ as $n\to \infty$. Together with~\labelcref{eq:ConvJnPnStronglyToI}, this shows that we are treating a convergence similar to the one
in~\cite[Section~2]{KP2018}; see also~\cite{V81}, \cite{IOS89}, \cite[Section~2.3]{KS2003}, \cite{PS2520}, \cite[Section~1.5]{PZ2023}, and the references therein.  
\end{remark}

\begin{remark}\label{rem:generalization_lem:isem23_14.1.2}
In the spirit of \Cref{rem:OrthProJCanInjIntro},
    if $D=\dom(A)$, the $H_n$ are closed subspaces of $H$, the $P_n$ are ($\dom(A)$-$\dom(A_n)$ stable; i.e., $\sup_n\|P_n\|_{L(\dom(A),\dom(A_n))}<\infty$) restrictions of the orthogonal projections $\tilde{P}_n\in L(H,H_n)$, the $J_n$ are restrictions of the canonical embeddings $\tilde{J}_n\in L(H_n,H)$, and $(T+A)^{-1}(\dom(A))\subseteq H$ is dense, then we do not need to assume~\labelcref{eq:CoerLimitTStrongImplStrong} in \Cref{lem:isem23_14.1.2}: 
    
    Indeed, let $h\in (T+A)^{-1}(\dom(A))\subseteq H$. Then
	\begin{multline}\label{eq:WOTimpliesCoercLimit}
	\Re \sp{h}{Th}_{H} = \lim_{n\to\infty} \Re \sp{h}{\tilde{J}_n(T_n+A_n)P_nh}_{H} = \lim_{n\to\infty} \Re\sp{P_n h}{T_n P_n h}_{H_n}\\
	 \geq c \lim_{n\to\infty} \norm{P_n h}_{H_n}^2 = c \lim_{n\to\infty} \norm{J_n P_n h}_{H}^2 = c\norm{h}_H^2\text{.}
	\end{multline}
	Hence, by a density argument, \labelcref{eq:CoerLimitTStrongImplStrong} follows.
\end{remark}
Before we prove the second central \Cref{lem:isem23_14.1.2_weak}, which is once again related to~\cite[Lemma~14.1.2]{SeTrWa22}, we state the following easy and well-known convergence result on which we will base our proof of \Cref{lem:isem23_14.1.2_weak}; compare \Cref{rem:UniformCompEmbAn}.
\begin{lemma}\label{lem:weak-strong_weak}
    Let $K_0,K_1$ be Hilbert spaces, $(B_n)_{n\in\N}$ a sequence in $L(K_0,K_1)$, $B\in L(K_0,K_1)$, $B_n\to B$ in the weak operator topology, $(u_n)_{n\in\N}$ a sequence in  $K_0$, $u\in K_0$, and $u_n\to u$ in $K_0$.
    
    Then, $B_nu_n\to Bu$ weakly in $K_1$.
\end{lemma}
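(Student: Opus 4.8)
The plan is to split $B_n u_n - Bu$ into one part governed by the strong convergence $u_n \to u$ and one part governed by the weak operator convergence $B_n \to B$, and to show that each part tends to $0$ when tested against an arbitrary $y\in K_1$.

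First I would establish that $(B_n)_{n\in\N}$ is uniformly bounded in operator norm. Since $B_n\to B$ in the weak operator topology, for every $x\in K_0$ and $y\in K_1$ the scalar sequence $\sp{B_n x}{y}_{K_1}$ converges and is therefore bounded. Applying the uniform boundedness principle first in $y$---so that $(B_n x)_{n\in\N}$ is norm-bounded for each fixed $x$---and then in $x$, one obtains $C\coloneqq\sup_{n\in\N}\norm{B_n}<\infty$.

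With this bound in hand, I would fix an arbitrary $y\in K_1$ and use the decomposition
\[
\sp{B_n u_n}{y}_{K_1}-\sp{B u}{y}_{K_1}=\sp{B_n(u_n-u)}{y}_{K_1}+\sp{(B_n-B)u}{y}_{K_1}.
\]
The first summand is bounded in modulus by $C\norm{u_n-u}_{K_0}\norm{y}_{K_1}$ and hence tends to $0$ because $u_n\to u$ in $K_0$; the second summand tends to $0$ directly by the weak operator convergence of $B_n$ to $B$, applied to the fixed vector $u$. As $y$ was arbitrary, this yields $B_n u_n\to Bu$ weakly in $K_1$.

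The only step requiring genuine care is the uniform boundedness of $(B_n)_{n\in\N}$, since it is precisely this bound that allows $\norm{B_n}$ to be extracted from the mixed term $B_n(u_n-u)$ uniformly in $n$; without it, the strong convergence of $u_n$ alone could not control that term, and the argument would break down.
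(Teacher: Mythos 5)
Your proof is correct and follows essentially the same route as the paper's: uniform boundedness of $(B_n)_{n\in\N}$ via the uniform boundedness principle, followed by the identical splitting into a term controlled by $\norm{u_n-u}_{K_0}$ and a term controlled by the weak operator convergence applied to the fixed vector $u$ (the paper merely writes the first term in the adjoint form $\sp{B_n^\ast v}{u_n-u}_{K_0}$, which is the same estimate). No gaps; your extra care in deriving the uniform bound in two applications of the uniform boundedness principle is a correct elaboration of what the paper states in one line.
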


\begin{proof}
    Since $B_n\to B$ weakly, $(B_n)_{n\in\N}$ is bounded by the uniform boundedness principle. Let $v\in K_1$. Then,
    \[
        \sp{v}{B_n u_n}_{K_1} - \sp{v}{Bu}_{K_1}  = \sp{B_n^* v}{u_n-u}_{K_0} + \sp{v}{(B_n-B)u}_{K_1} \to 0. \qedhere
    \]
\end{proof}

\begin{lemma}
\label{lem:isem23_14.1.2_weak}
	For $n\in\N$, let $T_n\in L(H_n)$, $T\in L(H)$, and assume that there exist $c,d>0$ with
	\[\norm{T_n}\leq d\quad\text{ and }\Re \sp{h}{T_n h}_{H_n}\geq c\norm{h}^2_{H_n}\]
	 for all $n\in\N$ and $h\in H_n$ as well as
	 \begin{equation}\label{eq:CoerLimitTWeakImplStrong}
	 \Re \sp{h}{T h}_{H}\geq c\norm{h}^2_{H}
	 \end{equation}
	 for all $h\in H$. Assume $D\subseteq \dom(A)$ with $(T+A)^{-1}D\subseteq D$.
	Further, assume that $\dom(A)$ as a Hilbert space embeds compactly into $H$ and that $(T_n+A_n)_{n\in\N}$ converges compact-weakly to $T+A$ on $D$.
	
	Then,
	$0\in\bigcap_{n\in\N}\rho(T_n+A_n)\cap \rho(T+A)$ and
	\[J_n(T_n+A_n)^{-1}P_n  \to (T+A)^{-1}\]
	in the strong operator topology on $L(D,H)$ as $n\to\infty$.
\end{lemma}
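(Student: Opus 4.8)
The plan is to keep the algebraic decomposition from the proof of \Cref{lem:isem23_14.1.2} and to replace the norm estimate of its residual term by an energy (testing) argument, since now only compact-weak information on the residual is available. As there, \Cref{lem:wp_easy} gives $0\in\bigcap_{n\in\N}\rho(T_n+A_n)\cap\rho(T+A)$, and the identity $A_n(T_n+A_n)^{-1}=I-T_n(T_n+A_n)^{-1}$ together with $\norm{T_n}\leq d$ and coercivity yields the uniform bound \eqref{eq:UniformBoundTnAnRes} on $(T_n+A_n)^{-1}$ in $L(H_n,\dom(A_n))$. Fixing $f\in D$ and $u\coloneqq(T+A)^{-1}f\in D$, the identity \eqref{eq:RearrangeConvResWSImpl} becomes
\[
J_n(T_n+A_n)^{-1}P_nf-(T+A)^{-1}f=J_nv_n-(I-J_nP_n)u,
\]
with $r_n\coloneqq P_n(T+A)u-(T_n+A_n)P_nu$ and $v_n\coloneqq(T_n+A_n)^{-1}r_n$. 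By \eqref{eq:ConvJnPnStronglyToI} the term $(I-J_nP_n)u$ tends to $0$ in $\dom(A)$, hence in $H$, so everything reduces to showing $J_nv_n\to 0$ in $H$.

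First I would verify that $(v_n)_{n\in\N}$ is an admissible test sequence for compact-weak convergence. Using $\sup_{n}\norm{P_n}_{L(D,\dom(A_n))}<\infty$ and $\norm{T_n}\leq d$, the bounds $\norm{P_n(T+A)u}_{H_n}=\norm{P_nf}_{H_n}\leq\norm{P_n}\norm{f}_D$ and $\norm{(T_n+A_n)P_nu}_{H_n}\leq(d+1)\norm{P_n}\norm{u}_D$ show that $(r_n)_n$ is bounded in $H_n$, whence \eqref{eq:UniformBoundTnAnRes} makes $(v_n)_n$ bounded in $\dom(A_n)$. Applying the assumed compact-weak convergence of $(T_n+A_n)_n$ to $T+A$ on $D$ with the fixed $u\in(T+A)^{-1}(D)$ and $g_n\coloneqq v_n$ then gives $\sp{v_n}{r_n}_{H_n}\to 0$.

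I would then run the decisive energy estimate. Since $r_n=(T_n+A_n)v_n$ and $A_n$ is skew-selfadjoint, so that $\Re\sp{v_n}{A_nv_n}_{H_n}=0$, coercivity yields
\[
c\norm{v_n}_{H_n}^2\leq\Re\sp{v_n}{T_nv_n}_{H_n}=\Re\sp{v_n}{(T_n+A_n)v_n}_{H_n}=\Re\sp{v_n}{r_n}_{H_n}\to 0.
\]
Hence $\norm{v_n}_{H_n}\to 0$, and with the uniformly bounded extensions $\tilde{J}_n$ of $J_n$ we obtain $\norm{J_nv_n}_{H}=\norm{\tilde{J}_nv_n}_{H}\leq(\sup_{n}\norm{\tilde{J}_n})\norm{v_n}_{H_n}\to 0$. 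Combining the two summands proves $J_n(T_n+A_n)^{-1}P_nf\to(T+A)^{-1}f$ in $H$.

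The main obstacle is precisely this middle passage: with only compact-weak (rather than strong) control of the residual $r_n$, one cannot estimate $\norm{J_nv_n}$ termwise, and the decisive device is to test the residual against the \enquote{solution} $v_n$ itself—turning the weak statement into the coercive energy estimate above. This also explains why, unlike in \Cref{lem:isem23_14.1.2}, convergence is only claimed in the $H$-topology, since we control $\norm{v_n}_{H_n}$ but not $\norm{v_n}_{\dom(A_n)}$. The compact embedding $\dom(A)\hookrightarrow H$ assumed here is, as in \Cref{rem:UniformCompEmbAn}, the natural setting in which compact-weak convergence arises from genuine weak convergence.
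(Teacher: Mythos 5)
Your proof is correct, but it takes a genuinely different route from the paper's. The paper argues by weak compactness: it shows $u_n\coloneqq J_n(T_n+A_n)^{-1}P_nf$ is bounded in $\dom(A)$, extracts a subsequence converging weakly in $\dom(A)$ and---via the compact embedding $\dom(A)\hookrightarrow H$---strongly in $H$, identifies the limit by testing the residual term in \labelcref{eq:RearrangeConvResWSImpl} against $(T_{n_k}^\ast-A_{n_k})^{-1}\tilde{J}_{n_k}^\ast v$ for fixed $v\in H$ (these adjoint resolvents being uniformly bounded in $L(H_{n_k},\dom(A_{n_k}))$ by the same coercivity/skew-selfadjointness reasoning you use), and finishes with a subsequence-subsequence argument. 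You instead test the residual against the discrete solution $v_n=(T_n+A_n)^{-1}r_n$ itself, so that coercivity plus $\Re\sp{v_n}{A_nv_n}_{H_n}=0$ turns the compact-weak hypothesis directly into the norm convergence $\norm{v_n}_{H_n}\to 0$; both choices of test sequence are legitimate because the definition of compact-weak convergence quantifies over \emph{all} uniformly $\dom(A_n)$-bounded sequences, including data-dependent ones. Your route buys two things: it avoids subsequence extraction entirely, and---more substantially---it never invokes the hypothesis that $\dom(A)$ embeds compactly into $H$, so you have in fact proved \Cref{lem:isem23_14.1.2_weak} (and hence \Cref{thm:convergence_spatial_weak}) without that assumption; in the paper's proof the compact embedding is what upgrades weak subsequential limits to strong ones, whereas in applications (cf.\ \Cref{rem:UniformCompEmbAn}) compactness re-enters only as the natural mechanism by which compact-weak convergence is verified from genuinely weak convergence. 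One small remark: your closing explanation that convergence holds only in $H$ is accurate, but the precise reason is that the energy estimate controls $\norm{v_n}_{H_n}$ while the $\dom(A_n)$-norm of $v_n$ is merely bounded, not vanishing.
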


\begin{proof}
	First, we obtain $0\in\bigcap_{n\in\N}\rho(T_n+A_n)\cap \rho(T+A)$ applying \Cref{lem:wp_easy}.
	
	Let $f\in D$ and $u_n\coloneqq J_n(T_n+A_n)^{-1}P_nf$ for $n\in\N$. Then, $(u_n)_{n\in\N}$ is bounded in $\dom(A)$. Hence, there exists a subsequence $(u_{n_k})_{k\in\N}$ which converges weakly to some $u\in \dom(A)$, and by compactness of the embedding, $u_{n_k}\to u$ in $H$ as $k\to\infty$.
	As in~\labelcref{eq:RearrangeConvResWSImpl} we obtain
	\begin{multline*}
	J_{n_k}(T_{n_k}+A_{n_k})^{-1}P_{n_k} f - (T+A)^{-1}f\\ = J_{n_k}(T_{n_k}+A_{n_k})^{-1}\bigl(P_{n_k}(T+A) -(T_{n_k}+A_{n_k})P_{n_k}\bigr)(T+A)^{-1}f - (I-J_{n_k}P_{n_k})(T+A)^{-1}f
	\end{multline*}
	for $k\in\N$.
	We have $J_{n_k}P_{n_k}(T+A)^{-1} f\to (T+A)^{-1}f$ in $\dom(A)$, so also in $H$. Furthermore, for $v\in H$, we have (by skew-selfadjointness of the $A_{n_k}$)
	\begin{multline*}
        \sp{v}{J_{n_k}(T_{n_k}+A_{n_k})^{-1}\bigl(P_{n_k}(T+A) -(T_{n_k}+A_{n_k})P_{n_k}\bigr)(T+A)^{-1}f }_H  \\
        = \sp{(T_{n_k}^\ast-A_{n_k})^{-1}\tilde{J}_{n_k}^\ast v}{\bigl(P_{n_k}(T+A) -(T_{n_k}+A_{n_k})P_{n_k}\bigr)(T+A)^{-1}f}_{H_{n_k}}.
	\end{multline*}
    With \Cref{lem:wp_easy} we infer
    \[\sup_{k\in\N}\norm{(T^\ast_{n_k}-A_{n_k})^{-1}}_{L(H_{n_k})}\leq\frac{1}{c}\]
    and since $A_{n_k}(T^\ast_{n_k}-A_{n_k})^{-1}=T^\ast_{n_k}(T^\ast_{n_k}-A_{n_k})^{-1}-1$, we infer that
    \[
     \sup_{k\in \N} \norm{(T^\ast_{n_k}-A_{n_k})^{-1}}_{L(H_{n_k},\dom(A_{n_k}))}<\infty.
    \]
    Thus,
    \[J_{n_k}(T_{n_k}+A_{n_k})^{-1}P_{n_k} f \to (T+A)^{-1}f\]
    weakly in $H$ as $n\to\infty$,
    which yields
    \[u = (T+A)^{-1} f.\]
     Now, since we have identified the limit $u$ (which is independent of the subsequence $(n_k)_{k\in\N}$), a subsequence-subsequence argument yields $u_n\to u$, or put differently,
    \[J_n(T_n+A_n)^{-1}P_n \to (T+A)^{-1}\]
    in the strong operator topology on $L(D,H)$ as $n\to\infty$.
\end{proof}

\begin{remark}\label{rem:generalization_lem:isem23_14.1.2_weak}
    In the spirit of \Cref{rem:generalization_lem:isem23_14.1.2} we discuss \Cref{lem:isem23_14.1.2_weak} in the following special case: assume that we are in the setting of  \Cref{rem:generalization_lem:isem23_14.1.2}, i.e., $D=\dom(A)$, the $H_n$ are closed subspaces of $H$, the $P_n$ are ($\dom(A)$-$\dom(A_n)$ stable) restrictions of the orthogonal projections $\tilde{P}_n\in L(H,H_n)$, the $J_n$ are restrictions of the canonical embeddings $\tilde{J}_n\in L(H_n,H)$, and $(T+A)^{-1}(\dom(A))\subseteq H$ is dense.
    
 We first observe that, similarly to \Cref{rem:JTAPstrongConvFollows}, for $f\in (T+A)^{-1}(\dom(A))\subseteq H$ and $g\in\dom (A)$
 \begin{multline*}
  \sp{g}{(T+A)f-\tilde{J}_n(T_n+A_n)P_n f}_{H}\\
  =\sp{P_n g}{P_n(T+A)f-(T_n+A_n)P_n f}_{H_n} + \sp{g}{(T+A)f-J_nP_n(T+A)f}_{H}\to 0
 \end{multline*}
 as $n\to\infty$.
 Thus, we can use~\labelcref{eq:WOTimpliesCoercLimit} to drop the assumption~\labelcref{eq:CoerLimitTWeakImplStrong}.
\end{remark}
We will now lift \Cref{lem:isem23_14.1.2} and \Cref{lem:isem23_14.1.2_weak} to the corresponding weighted $L_{2,\nu}$-spaces by virtue of the Fourier--Laplace transformation.

Applying the same principle as in \Cref{lemma:LiftedSpatialOperatorA}, for $n\in\N$ and $\nu\in\R$, we obtain the lifted operator
$P_n\in L(L_{2,\nu}(\R,D),L_{2,\nu}(\R,H_n))$, defined as
 \begin{equation*}
 P_n\colon
 \begin{cases}
\hfill L_{2,\nu}(\R,D) &\to\quad L_{2,\nu} (\R,H_n)\\
\hfill f&\mapsto\quad (t\mapsto P_n(f(t)))
\end{cases}
 \end{equation*}
 and similarly $\tilde{J}_n\in L(L_{2,\nu}(\R,H_n),L_{2,\nu}(\R,H))$ for $n\in\N$. Clearly, $\sup_{n\in\N}\norm{P_n}<\infty$ and $\sup_{n\in\N}\norm{\tilde{J}_n}<\infty$ still hold for these lifted operators.
\begin{lemma}
	\label{prop:isem23_13.1.2}
	Let $N$ and $N_n$ be material laws taking values in $L(H)$ and $L(H_n)$, respectively, such that
\[\sbb(N), \sup_{n\in\N}\sbb(N_n) \leq \nu_0\]
and
\begin{equation*}
    \sup_{n\in\N} \sup_{z\in \C_{\Re >\nu_0}} \norm{N_n(z)} \leq d,
\end{equation*}
i.e., that $(N_n)_{n\in\N}$ is a uniformly bounded sequence.
Moreover, for $z\in \C$ with $\Re z>\nu_0$, assume that
	$\tilde{J}_n N_n(z)P_n\to N(z)$ in the strong (weak) operator topology on $L(D,H)$ as $n\to\infty$. Let $\nu>\nu_0$. Then,
	\[\tilde{J}_nN_n(\partial_{t,\nu})P_n\to N(\partial_{t,\nu})\]
	 in the strong (weak) operator topology on $L(L_{2,\nu}(\R,D),L_{2,\nu}(\R,H))$ as $n\to\infty$.
\end{lemma}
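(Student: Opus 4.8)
The plan is to conjugate everything with the Fourier--Laplace transformation, thereby reducing the assertion to a statement about pointwise multiplication operators on the unweighted $L_2$-spaces, which then follows from the pointwise-in-$z$ hypothesis by dominated convergence.

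First I would record that the (pointwise-in-time) lifted operators commute with the Fourier--Laplace transformation: since $P_n$, $\tilde{J}_n$, and the canonical embedding $L_{2,\nu}(\R,D)\hookrightarrow L_{2,\nu}(\R,H)$ act on the values while $\calL^\bullet_\nu=\calF\exp(-\nu\multm)$ acts on the time variable, we have $\calL^{H_n}_\nu P_n=P_n\calL^D_\nu$ and $\calL^H_\nu\tilde{J}_n=\tilde{J}_n\calL^{H_n}_\nu$. Recalling $N_n(\partial_{t,\nu})=(\calL^{H_n}_\nu)^\ast N_n(\i\multm+\nu)\calL^{H_n}_\nu$ and the analogous identity for $N(\partial_{t,\nu})$, these commutations give
\[\calL^H_\nu\,\tilde{J}_nN_n(\partial_{t,\nu})P_n\,(\calL^D_\nu)^\ast=\tilde{J}_nN_n(\i\multm+\nu)P_n,\qquad \calL^H_\nu\,N(\partial_{t,\nu})\,(\calL^D_\nu)^\ast=N(\i\multm+\nu),\]
where the right-hand sides denote the pointwise multiplication operators in $L(L_2(\R,D),L_2(\R,H))$. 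As $\calL^D_\nu$ and $\calL^H_\nu$ are unitary and conjugation by bounded operators preserves both strong and weak operator convergence, it suffices to prove $\tilde{J}_nN_n(\i\multm+\nu)P_n\to N(\i\multm+\nu)$ in the respective topology.

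For the strong case I would fix $f\in L_2(\R,D)$ and examine $\int_\R\norm{\tilde{J}_nN_n(\i t+\nu)P_nf(t)-N(\i t+\nu)f(t)}_H^2\,\mathrm{d}t$. For almost every $t\in\R$ the point $z=\i t+\nu$ satisfies $\Re z=\nu>\nu_0$ and $f(t)\in D$, so the hypothesis yields pointwise convergence of the integrand to $0$. The integrand is dominated uniformly in $n$: the bound $\sup_n\sup_z\norm{N_n(z)}\leq d$ together with $\sup_n\norm{P_n}<\infty$ and $\sup_n\norm{\tilde{J}_n}<\infty$ controls $\norm{\tilde{J}_nN_n(\i t+\nu)P_n}_{L(D,H)}$, while $\nu>\nu_0\geq\sbb(N)$ bounds $\norm{N(\i t+\nu)}$ uniformly in $t$; hence the integrand is $\leq C\norm{f(t)}_D^2\in L_1(\R)$. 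Dominated convergence drives the integral to $0$, which is the desired strong convergence. For the weak case I would instead fix $f\in L_2(\R,D)$ and $g\in L_2(\R,H)$ and consider $\int_\R\sp{g(t)}{\tilde{J}_nN_n(\i t+\nu)P_nf(t)-N(\i t+\nu)f(t)}_H\,\mathrm{d}t$; the integrand again tends to $0$ pointwise by the weak hypothesis, and is now dominated by $C\norm{g(t)}_H\norm{f(t)}_D$, which lies in $L_1(\R)$ by the Cauchy--Schwarz inequality, so dominated convergence again finishes the argument.

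The only points requiring genuine care are the uniform (in $n$ and $t$) domination, which is precisely what the extra boundedness assumption on $(N_n)_n$ furnishes and is the reason it cannot be dispensed with, and the measurability of the integrands, which follows from the norm-continuity of the holomorphic material laws together with the Bochner measurability of $f$. These are the mild obstacles; the remainder is bookkeeping of the Fourier--Laplace conjugation. I expect the strong and weak cases to run in parallel, differing only in the choice of the dominating function.
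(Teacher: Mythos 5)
Your proposal is correct and follows essentially the same route as the paper's proof: conjugate with the Fourier--Laplace transformations via the commutation relations $\calL^{H_n}_\nu P_n=P_n\calL^D_\nu$ and $\tilde{J}_n(\calL^{H_n}_\nu)^\ast=(\calL^H_\nu)^\ast\tilde{J}_n$, reduce to the multiplication operators $\tilde{J}_nN_n(\i\multm+\nu)P_n$ and $N(\i\multm+\nu)$ on the unweighted $L_2$-spaces, and conclude by dominated convergence. The only cosmetic difference is that the paper establishes the commutation relations by computing on $C_c^\infty(\R,D)$ and invoking density, while you assert them directly from the pointwise-in-time action of the operators, and you spell out the domination estimates that the paper leaves implicit.
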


\begin{proof}
	For $\varphi\in C_c^\infty (\R,D)\subseteq L_{2,\nu}(\R,D)$ and $n\in\N$ we obtain $P_n\varphi\in C_c^\infty (\R,H_n)\subseteq L_{2,\nu}(\R,H_n)$ and
	\begin{multline*}
	\calL_\nu^{H_n}P_n\varphi=t\mapsto \frac{1}{\sqrt{2\uppi}}\int_{\R}\exp(-(it+\nu)s)P_n\varphi(s)\mathrm{d}s\\
	=t\mapsto P_n\Bigl(\frac{1}{\sqrt{2\uppi}}\int_{\R}\exp(-(it+\nu)s)\varphi(s)\mathrm{d}s\Bigr)=P_n \calL_\nu^{D}\varphi\text{.}
	\end{multline*}
	Hence, by density, this implies $\calL_\nu^{H_n}P_n=P_n \calL_\nu^{D}$ for $n\in\N$. Analogously, we infer $\tilde{J}_n (\calL_\nu^{H_n})^\ast=(\calL_\nu^{H})^\ast \tilde{J}_n$ for $n\in\N$. Note that we lifted $P_n$ and $\tilde{J}_n$ with respect to the weight parameter $\nu$ on the left-hand sides of both identities and with
	respect to the respective unweighted spaces on the right-hand sides. Thus,
	\[\tilde{J}_n N_n(\partial_{t,\nu})P_n= \tilde{J}_n (\calL^{H_n}_\nu)^\ast N_n(\i\multm+\nu)\calL^{H_n}_\nu P_n=(\calL_\nu^{H})^\ast \tilde{J}_n N_n(\i\multm+\nu) P_n \calL_\nu^{D}\]
	holds for $n\in\N$. As $\calL_\nu^{D}$ and $\calL_\nu^{H}$ are both unitary mappings it remains to show that
	$\tilde{J}_n N_n(\i\mathrm{m} + \nu) P_n\to N(\i \mathrm{m} + \nu)$ in the strong (weak) operator topology on $L(L_{2}(\R,D),L_{2}(\R,H))$ as $n\to\infty$, which
	immediately follows from the dominated convergence theorem.	
\end{proof}
With these tools at hand we can prove the following generalisation of \cite[Theorem~13.1.5]{SeTrWa22}.
\begin{theorem}\label{thm:convergence_spatial}
    Recall $D\subseteq \dom(A)$.
    For $z\in \C$ with $\Re z>\nu_0$, let $(zM(z)+A)^{-1}(D)\subseteq D$ and $zM_n(z)+A_n\to zM(z)+A$ strongly on $D$ as $n\to\infty$. Moreover, let $\nu>\nu_0$. Then,
     \[\tilde{J}_n\bigl(\overline{\partial_{t,\nu}M_n(\partial_{t,\nu}) +A_n}\bigr)^{-1}P_n\to \bigl(\overline{\partial_{t,\nu}M(\partial_{t,\nu}) +A}\bigr)^{-1}\]
      in the strong operator topology on $L(L_{2,\nu}(\R,D),L_{2,\nu}(\R,H))$ as $n\to\infty$.
\end{theorem}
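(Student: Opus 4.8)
The plan is to reduce the statement to the pointwise (in the Fourier--Laplace variable) resolvent convergence of \Cref{lem:isem23_14.1.2} and then lift it in time via \Cref{prop:isem23_13.1.2}, using \Cref{thm:PicardWP} to identify both solution operators as material law operators. Concretely, set $N(z)\coloneqq (zM(z)+A)^{-1}$ and $N_n(z)\coloneqq (zM_n(z)+A_n)^{-1}$ for $\Re z>\nu_0$. By \Cref{thm:PicardWP}, applied to $(M,A)$ and to each $(M_n,A_n)$ with the coercivity constant of \labelcref{eq:CoerciveMnAndMmeansStability}, these are material laws and $\bigl(\overline{\partial_{t,\nu}M(\partial_{t,\nu})+A}\bigr)^{-1}=N(\partial_{t,\nu})$ as well as $\bigl(\overline{\partial_{t,\nu}M_n(\partial_{t,\nu})+A_n}\bigr)^{-1}=N_n(\partial_{t,\nu})$. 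Thus it suffices to verify the hypotheses of \Cref{prop:isem23_13.1.2} for the material laws $N$ and $N_n$.

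Next I would check the uniform bounds required by \Cref{prop:isem23_13.1.2}. Since $zM(z)\in L(H)$ and $zM_n(z)\in L(H_n)$ are coercive with the \emph{same} constant $c>0$ for all $\Re z>\nu_0$ by \labelcref{eq:CoerciveMnAndMmeansStability}, a one-line estimate exploiting skew-selfadjointness of $A$ and $A_n$ (as in \Cref{lem:wp_easy}) gives $\norm{N(z)}\leq 1/c$ and $\norm{N_n(z)}\leq 1/c$ uniformly in $n\in\N$ and in $z$ with $\Re z>\nu_0$. In particular $\sbb(N),\sup_{n}\sbb(N_n)\leq\nu_0$ and $(N_n)_n$ is uniformly bounded, so the standing assumptions of \Cref{prop:isem23_13.1.2} hold with $d=1/c$.

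The core step is the pointwise convergence $\tilde{J}_nN_n(z)P_n\to N(z)$ in the strong operator topology on $L(D,H)$ for each fixed $z$ with $\Re z>\nu_0$. For this I would apply \Cref{lem:isem23_14.1.2} with $T\coloneqq zM(z)$ and $T_n\coloneqq zM_n(z)$; its hypotheses are precisely the uniform bound $\norm{zM_n(z)}\leq d_z$ from \labelcref{eq:Mn_uniformly_bounded}, the coercivity \labelcref{eq:CoerciveMnAndMmeansStability}, the inclusion $(zM(z)+A)^{-1}(D)\subseteq D$, and the strong convergence $zM_n(z)+A_n\to zM(z)+A$ on $D$, all assumed in the theorem. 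This yields $J_n(zM_n(z)+A_n)^{-1}P_n\to(zM(z)+A)^{-1}$ strongly in $L(D,\dom(A))$. The only point needing care is matching $J_n$ with $\tilde{J}_n$: since $N_n(z)$ has range in $\dom(A_n)$ and $\tilde{J}_n$ extends $J_n$, we have $\tilde{J}_nN_n(z)P_nf=J_n(zM_n(z)+A_n)^{-1}P_nf$ in $H$; composing with the continuous embedding $\dom(A)\hookrightarrow H$ turns the $\dom(A)$-convergence into $H$-convergence, giving exactly $\tilde{J}_nN_n(z)P_n\to N(z)$ strongly in $L(D,H)$.

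Finally, \Cref{prop:isem23_13.1.2} upgrades this to $\tilde{J}_nN_n(\partial_{t,\nu})P_n\to N(\partial_{t,\nu})$ strongly on $L(L_{2,\nu}(\R,D),L_{2,\nu}(\R,H))$ for $\nu>\nu_0$, which is the claim after re-substituting the solution-operator identifications from \Cref{thm:PicardWP}. I expect no genuine obstacle: the argument is almost entirely bookkeeping, and the one subtlety is the $J_n$-versus-$\tilde{J}_n$ and $\dom(A)$-versus-$H$ identification just described. The conceptual content lies in the reduction itself, which is made available by the preceding two lemmas together with the material-law-operator characterisation of the solution operators.
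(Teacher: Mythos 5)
Your proposal is correct and follows essentially the same route as the paper's proof: pointwise resolvent convergence via \Cref{lem:isem23_14.1.2} applied to $T_n\coloneqq zM_n(z)$ and $T\coloneqq zM(z)$, then lifting in time via \Cref{prop:isem23_13.1.2} with \Cref{thm:PicardWP} identifying the solution operators as the material law operators of $N_n$ and $N$. The only difference is that you spell out details the paper leaves implicit, namely the uniform bound $\norm{N_n(z)}\leq 1/c$ verifying the hypotheses of \Cref{prop:isem23_13.1.2} and the passage from $J_n$-convergence in $L(D,\dom(A))$ to $\tilde{J}_n$-convergence in $L(D,H)$.
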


\begin{proof}
	Let $z\in \C$ with $\Re z>\nu_0$. 
	Then, \Cref{lem:isem23_14.1.2} applied to $T_n\coloneqq zM_n(z)$ for $n\in\N$ and $T\coloneqq zM(z)$ yields
	\[\tilde{J}_n(zM_n(z)+A_n)^{-1}P_n \to (zM(z)+A)^{-1}\]
	in the strong operator topology on $L(D,H)$ as $n\to\infty$.
	Thus, \Cref{thm:PicardWP} and \Cref{prop:isem23_13.1.2} applied to $N_n(z):=(zM_n(z)+A_n)^{-1}$ and $N(z):=(zM(z)+A)^{-1}$ for $z\in \C$ with $\Re z>\nu_0$ yield the assertion.
\end{proof}

\begin{remark}
    The strong convergence $M_n(z)+A_n\to M(z)+A$ on $D$ for all $z\in \C$ with $\Re z>\nu_0$ can be interpreted as a consistency condition.
    Moreover, the conditions~\labelcref{eq:CoerciveMnAndMmeansStability} and~\labelcref{eq:Mn_uniformly_bounded} are stability-type conditions. 
    Thus, Theorem \ref{thm:convergence_spatial} yields that consistency plus stability imply convergence of the approximated solutions to the actual one. Hence, \Cref{thm:convergence_spatial} is a version of the well-known meta-theorem in approximation theory and numerical analysis.
\end{remark}
The following theorem is generalisation of \cite[Theorem~14.1.1]{SeTrWa22}.
\begin{theorem}\label{thm:convergence_spatial_weak}
    Recall $D\subseteq \dom(A)$.
   For $z\in \C$ with $\Re z>\nu_0$, let $ (zM(z)+A)^{-1}(D)\subseteq D$ and $zM_n(z)+A_n\to zM(z)+A$ compact-weakly on $D$ as $n\to\infty$. Moreover, assume that $\dom(A)$ as a Hilbert space embeds compactly into $H$, and let $\nu>\nu_0$. Then,
     \[\tilde{J}_n\bigl(\overline{\partial_{t,\nu}M_n(\partial_{t,\nu}) +A_n}\bigr)^{-1}P_n\to \bigl(\overline{\partial_{t,\nu}M(\partial_{t,\nu}) +A}\bigr)^{-1}\]
      in the strong operator topology on $L(L_{2,\nu}(\R,D),L_{2,\nu}(\R,H))$ as $n\to\infty$.
\end{theorem}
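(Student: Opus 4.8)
The plan is to mirror the proof of \Cref{thm:convergence_spatial} almost verbatim, replacing the single invocation of the strong-convergence \Cref{lem:isem23_14.1.2} by its compact-weak counterpart \Cref{lem:isem23_14.1.2_weak}. The conceptual content is that, although we only assume compact-weak convergence of the operators $zM_n(z)+A_n$, the assumed compactness of the embedding $\dom(A)\hookrightarrow H$ upgrades this, for each fixed $z$, to \emph{strong} convergence of the resolvents; the subsequent lifting to $L_{2,\nu}$ then proceeds exactly as in the strong case, so that the final convergence is again in the strong operator topology.

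First I would fix $z\in\C$ with $\Re z>\nu_0$ and set $T_n\coloneqq zM_n(z)$ and $T\coloneqq zM(z)$. The boundedness $\norm{T_n}\leq d_z$ follows from~\labelcref{eq:Mn_uniformly_bounded}, the coercivity estimates for $T_n$ and $T$ from~\labelcref{eq:CoerciveMnAndMmeansStability} (the latter supplying hypothesis~\labelcref{eq:CoerLimitTWeakImplStrong}), the invariance $(T+A)^{-1}(D)\subseteq D$ and the compact-weak convergence $T_n+A_n\to T+A$ on $D$ are precisely the standing assumptions of the theorem, and the compact embedding is assumed in the statement. Thus \Cref{lem:isem23_14.1.2_weak} applies and yields $0\in\rho(zM(z)+A)\cap\bigcap_{n\in\N}\rho(zM_n(z)+A_n)$ together with
\[J_n(zM_n(z)+A_n)^{-1}P_n\to(zM(z)+A)^{-1}\]
in the strong operator topology on $L(D,H)$. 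Since $(zM_n(z)+A_n)^{-1}P_n$ takes values in $\dom(A_n)$, where $J_n$ and $\tilde{J}_n$ coincide, the same convergence holds with $\tilde{J}_n$ in place of $J_n$.

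Next I would pass to the $L_{2,\nu}$-level via \Cref{prop:isem23_13.1.2}. By \Cref{thm:PicardWP}, the families $N(z)\coloneqq(zM(z)+A)^{-1}$ and $N_n(z)\coloneqq(zM_n(z)+A_n)^{-1}$ are the material laws associated with the inverses, with $\sbb(N),\sup_{n\in\N}\sbb(N_n)\leq\nu_0$ and with the uniform bound $\norm{N_n(z)}\leq 1/c$ on $\C_{\Re>\nu_0}$ coming from the coercivity in~\labelcref{eq:CoerciveMnAndMmeansStability} together with \Cref{lem:wp_easy} (skew-selfadjointness of $A_n$ makes $\Re\sp{u}{(zM_n(z)+A_n)u}_{H_n}\geq c\norm{u}_{H_n}^2$). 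These are exactly the hypotheses of \Cref{prop:isem23_13.1.2}, and the strong, pointwise-in-$z$ convergence $\tilde{J}_nN_n(z)P_n\to N(z)$ on $L(D,H)$ is what the previous paragraph established. Applying the strong version of \Cref{prop:isem23_13.1.2} for $\nu>\nu_0$ therefore gives
\[\tilde{J}_nN_n(\partial_{t,\nu})P_n\to N(\partial_{t,\nu})\]
in the strong operator topology on $L(L_{2,\nu}(\R,D),L_{2,\nu}(\R,H))$. Finally, \Cref{thm:PicardWP} identifies $N(\partial_{t,\nu})=(\overline{\partial_{t,\nu}M(\partial_{t,\nu})+A})^{-1}$ and likewise each $N_n(\partial_{t,\nu})=(\overline{\partial_{t,\nu}M_n(\partial_{t,\nu})+A_n})^{-1}$, which is the claimed assertion.

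I do not expect a genuine obstacle here, as the argument is structurally a copy of \Cref{thm:convergence_spatial}; the main point to get right is that no weak version of the lifting \Cref{prop:isem23_13.1.2} is needed, because the compactness hypothesis already promotes the resolvent convergence to the strong operator topology at each fixed $z$ \emph{before} the Fourier--Laplace lift is performed. The only bookkeeping subtlety is that \Cref{lem:isem23_14.1.2_weak} delivers convergence in $L(D,H)$ rather than in $L(D,\dom(A))$ as in the strong case; this is nonetheless sufficient, since \Cref{prop:isem23_13.1.2} requires the pointwise hypothesis only on $L(D,H)$.
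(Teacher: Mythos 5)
Your proposal is correct and follows essentially the same route as the paper: apply \Cref{lem:isem23_14.1.2_weak} pointwise in $z$ to $T_n=zM_n(z)$, $T=zM(z)$, then lift via \Cref{thm:PicardWP} and \Cref{prop:isem23_13.1.2} applied to $N_n(z)=(zM_n(z)+A_n)^{-1}$ and $N(z)=(zM(z)+A)^{-1}$. Your added remarks — that $\tilde{J}_n$ and $J_n$ agree on $\dom(A_n)$, and that the lifting lemma only needs convergence in $L(D,H)$, so no weak version of it is required — are exactly the details the paper leaves implicit.
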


\begin{proof}
	Let $z\in \C$ with $\Re z>\nu_0$. Then, Lemma \ref{lem:isem23_14.1.2_weak} applied to $T_n\coloneqq zM_n(z)$ for $n\in\N$ and $T\coloneqq zM(z)$ yields
	\[\tilde{J}_n(zM_n(z)+A_n)^{-1}P_n \to (zM(z)+A)^{-1}\]
	in the strong operator topology on $L(D,H)$ as $n\to\infty$.
	Thus, \Cref{thm:PicardWP} and \Cref{prop:isem23_13.1.2} applied to $N_n(z):=(zM_n(z)+A_n)^{-1}$ and $N(z):=(zM(z)+A)^{-1}$ for $z\in \C$ with $\Re z>\nu_0$ yield the assertion. 
\end{proof}

\section{Example: The Heat Equation}
\label{sec:ExHeatEq1d}
In this section, we will apply the results from \cref{sec:SpatialApproximation} to approximate the heat equation in different dimensions with different boundary conditions using spectral methods.
\begin{example}\label{ex:Spectr1DMixedBC}
 Let $\Omega\coloneqq (0,1)$ be the unit interval and $H\coloneqq L_2(\Omega)\times L_2(\Omega)$.
 We consider the heat equation with homogeneous mixed boundary conditions; that is
\[\Bigl(\partial_{t,\nu} \begin{pmatrix}1 & 0\\0 & 0\end{pmatrix} + \begin{pmatrix}0 & 0\\0 & 1\end{pmatrix} + \begin{pmatrix} 0 & \partial^{\{1\}}_{x}\\\partial^{\{0\}}_{x} & 0\end{pmatrix}\Bigr) \begin{pmatrix}\theta\\q\end{pmatrix} = \begin{pmatrix} Q\\0\end{pmatrix}\]
 for $\nu>0$ and a source $Q\in L_{2,\nu}(\R, H)$. Then, $\dom(A)= H^1_{\{0\}}(\Omega)\times H^1_{\{1\}}(\Omega)$, where, for $y\in\{0,1\}$, by virtue of the Sobolev embedding theorem,
 $H^1_{\{y\}}(\Omega)\coloneqq\{f\in H^1(\Omega) \mid f(y)=0\}$ and $\partial^{\{y\}}_{x}$ is the restriction of the usual weak derivative from $H^1(\Omega)$ to $H^1_{\{y\}}(\Omega)$. Integration by parts yields $(\partial^{\{0\}}_{x})^\ast=-\partial^{\{1\}}_{x}$, i.e., $A$ is skew-selfadjoint.
 Moreover, $z\mapsto M(z) \coloneqq \begin{psmallmatrix}1 & 0\\0 & 0\end{psmallmatrix} + z^{-1}\begin{psmallmatrix}0 & 0\\0 & 1\end{psmallmatrix}$ is a material law with $\sbb(M) = 0$, and for $\nu>0$ and $z\in \C$ with $\Re z\geq \nu$ we have
 \begin{equation}\label{eq:Coercivity1DHeatEqOneSidedBD}
 \Re \sp{(x,y)}{zM(z)(x,y)}_{H}\geq \min\{\nu,1\} \norm{(x,y)}_{H}^2
 \end{equation}
 for all $(x,y)\in L_2(\Omega)\times L_2(\Omega)=H$.
 
We readily obtain $\ker (A)=\{0\}$ and thus, by virtue of the Rellich--Kondrachov theorem, $\ran(A)=H$ with a compact resolvent $A^{-1}\in L(H)$. Hence, by the spectral theorem for self-adjoint compact operators applied to $iA^{-1}$, we obtain an orthonormal basis $(\varphi_j)_{j\in\N}$ of $H$ consisting of eigenvectors of $A^{-1}$ corresponding to nonzero eigenvalues $(\lambda_j)_{j\in\N}$ in $\i\R$. For $j\in\N$, we clearly have $\varphi_j\in\dom(A)$ with $A\varphi_j= \lambda_j^{-1} \varphi_j$.
Moreover, for $v\in\dom(A)$, we obtain
\begin{equation}\label{eq:SpecTheAInv}
Av=\sum_{j=1}^\infty \sp{\varphi_j}{Av}_{H} \varphi_j= -\sum_{j=1}^\infty\sp{A\varphi_j}{v}_{H}\varphi_j
=\sum_{j=1}^\infty \lambda_j^{-1} \sp{\varphi_j}{v}_{H} \varphi_j\text{.}
\end{equation}
For $n\in\N$, let $H_n\coloneqq \ls \{\varphi_j\mid j\leq n\}$ and define
\[
 \pi_n\colon
 \begin{cases}
\hfill H &\to\quad H_n\\
\hfill u&\mapsto\quad \sum_{j=1}^n \sp{\varphi_j}{u}_{H} \varphi_j
\end{cases}
\text{,}\]
i.e., the orthogonal projection with restricted codomain, as well as the canonical embedding $\iota_n\colon H_n\to H$; note that $\pi_n=\iota_n^*$.
Setting $A_n\coloneqq \pi_n A\iota_n$,
we infer $\dom(A_n)=H_n$ and that $A_n$ is bounded and skew-selfadjoint.
$M_n\coloneqq \pi_n M\iota_n$ is a material law with $\sbb(M_n)=0$, and, for $\nu>0$ and $z\in\C_{\Re\geq \nu}$, we have~\eqref{eq:Coercivity1DHeatEqOneSidedBD} for  all $(x,y)\in H_n$.  Furthermore, the stability condition is satisfied, i.e., $\sup_{n\in\N} \norm{zM_n(z)} \leq \max\{\abs{z},1\}$. In view of \Cref{hyp:Pn}, we set
$D\coloneqq\dom(A)$ and $P_n\coloneqq\pi_n\restriction_{\dom(A)}$ for $n\in\N$. Indeed, we then have $P_n\in L(\dom(A),\dom(A_n))$ with an operator norm independent of $n\in\N$ since, for $v\in\dom(A)$, we have 
\begin{equation}\label{eq:AAnStabilHProj}
\norm{A_n P_n v}_{H_n}^2= \sum_{j=1}^n \abs{\lambda_j}^{-2} \abs{\sp{\varphi_j}{v}_{H}}^2
\leq \sum_{j=1}^\infty \abs{\lambda_j}^{-2} \abs{\sp{\varphi_j}{v}_{H}}^2=\norm{Av}_{H}^2\text{.}
\end{equation}
With $\tilde{J}_n\coloneqq \iota_n\in L(H_n,H)$ for $n\in\N$, we also immediately have $J_n\in L(\dom(A_n),\dom(A))$ for the same mapping, where all operator norms are bounded by $1$. For $v\in\dom(A)$, we
obtain
\begin{align}
\begin{aligned}\label{eq:AConvHProjSpecMeth}
J_nP_n v&=\iota_n\pi_n v=\sum_{j=1}^n \sp{\varphi_j}{v}_{H}\varphi_j\to  \sum_{j=1}^\infty \sp{\varphi_j}{v}_{H}\varphi_j=v\quad\text{and}\\
AJ_nP_n v&=A\iota_n\pi_n v=\sum_{j=1}^n \lambda_j^{-1} \sp{\varphi_j}{v}_{H}\varphi_j\to \sum_{j=1}^\infty \lambda_j^{-1} \sp{\varphi_j}{v}_{H} \varphi_j=Av
\end{aligned}
\end{align}
in $H$ as $n\to\infty$. Finally, for $z\in\C$ with $\Re z > 0$, invertibility of $zM(z)+A$ yields
\[(zM(z)+A)(D)=
(zM(z)+A)(\dom(A))=H\supseteq\dom(A)=D\text{,}\]
and, with~\eqref{eq:AConvHProjSpecMeth}, we infer
 \begin{multline*}
 \norm{P_n (zM(z)+A)v- (zM_n(z)+A_n) P_n v}_{H_n}\\
 =\norm{\pi_nzM(z)v+\pi_n Av- \pi_n zM(z)\iota_n\pi_n v+\pi_n A \iota_n\pi_n v}_{H_n}\\
\leq \norm{zM(z)(v-\iota_n\pi_n v)}_{H}+\norm{A(v-\iota_n\pi_n v)}_{H}
 \to 0
 \end{multline*}
     for all $v\in (zM(z)+A)^{-1}(\dom(A))\subseteq\dom(A)$ as $n\to\infty$.
\end{example}

All in all, we can apply \Cref{thm:convergence_spatial} and conclude with the following observation.
\begin{theorem}
\label{thm:convergence_heatequation_1d}
Using the notions from \Cref{ex:Spectr1DMixedBC}, for $\nu >0$, we have
     \begin{multline*}
     \iota_n\Biggl(\overline{\partial_{t,\nu} \pi_n\begin{pmatrix}1 & 0\\0 & 0\end{pmatrix}\iota_n + \pi_n\begin{pmatrix}0 & 0\\0 & 1\end{pmatrix}\iota_n + \pi_n\begin{pmatrix} 0 & \partial^{\{1\}}_{x}\\\partial^{\{0\}}_{x} & 0\end{pmatrix}\iota_n}\Biggr)^{-1}\pi_n\restriction_{\dom(A)}\\
     \to \Biggl(\overline{\partial_{t,\nu} \begin{pmatrix}1 & 0\\0 & 0\end{pmatrix} + \begin{pmatrix}0 & 0\\0 & 1\end{pmatrix} + \begin{pmatrix} 0 & \partial^{\{1\}}_{x}\\\partial^{\{0\}}_{x} & 0\end{pmatrix}}\Biggr)^{-1}
     \end{multline*}
      in the strong operator topology on $L(L_{2,\nu}(\R,H^1_{\{0\}}(\Omega)\times H^1_{\{1\}}(\Omega)),L_{2,\nu}(\R,L_2(\Omega)\times L_2(\Omega)))$ as $n\to\infty$.
\end{theorem}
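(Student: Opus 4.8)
The plan is to read off \Cref{thm:convergence_heatequation_1d} as a direct application of \Cref{thm:convergence_spatial} to the concrete data assembled in \Cref{ex:Spectr1DMixedBC}. The bulk of the work---namely verifying \Cref{hyp:Pn}---has already been carried out in the example, so the proof reduces to checking the two genuine hypotheses of \Cref{thm:convergence_spatial} and then identifying the abstract operators $\partial_{t,\nu}M_n(\partial_{t,\nu})+A_n$ and $\partial_{t,\nu}M(\partial_{t,\nu})+A$ with the explicit block operators appearing in the statement.

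First I would confirm that we may take $\nu_0=0$. The example records $\sbb(M)=\sbb(M_n)=0$, the uniform coercivity~\labelcref{eq:Coercivity1DHeatEqOneSidedBD} with constant $\min\{\nu,1\}$ for both $M$ and $M_n$, and the pointwise bound $\sup_{n\in\N}\norm{zM_n(z)}\leq\max\{\abs{z},1\}$, which is exactly~\labelcref{eq:Mn_uniformly_bounded}. The operators $P_n=\pi_n\restriction_{\dom(A)}$ and $\tilde{J}_n=J_n=\iota_n$ have operator norms bounded independently of $n$ (the bound for $P_n$ following from~\labelcref{eq:AAnStabilHProj}), and~\labelcref{eq:AConvHProjSpecMeth} furnishes $J_nP_nv\to v$ in the graph norm of $A$. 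Hence \Cref{hyp:Pn} holds with $D=\dom(A)$.

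Next I would check the two hypotheses specific to \Cref{thm:convergence_spatial} for $T_n=zM_n(z)$ and $T=zM(z)$. The invariance $(zM(z)+A)^{-1}(D)\subseteq D$ follows from the example's observation that $(zM(z)+A)(\dom(A))=H\supseteq\dom(A)=D$, using that $A$ is boundedly invertible. The strong convergence $zM_n(z)+A_n\to zM(z)+A$ on $D$ is precisely the final displayed estimate of the example. With these in hand, \Cref{thm:convergence_spatial} applies verbatim.

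The one step requiring genuine care---and thus the main obstacle---is the identification of $\partial_{t,\nu}M_n(\partial_{t,\nu})+A_n$ with the explicit operator on the left-hand side of the statement. Here I would use that $zM_n(z)=\pi_n\bigl(z\begin{psmallmatrix}1&0\\0&0\end{psmallmatrix}+\begin{psmallmatrix}0&0\\0&1\end{psmallmatrix}\bigr)\iota_n$ together with the fact that the spatial maps $\pi_n,\iota_n$ act pointwise in time and therefore commute with the functional calculus in $\partial_{t,\nu}$; consequently
\[\partial_{t,\nu}M_n(\partial_{t,\nu})+A_n=\partial_{t,\nu}\,\pi_n\begin{psmallmatrix}1&0\\0&0\end{psmallmatrix}\iota_n+\pi_n\begin{psmallmatrix}0&0\\0&1\end{psmallmatrix}\iota_n+\pi_n\begin{psmallmatrix}0 & \partial^{\{1\}}_{x}\\\partial^{\{0\}}_{x} & 0\end{psmallmatrix}\iota_n,\]
and likewise $\partial_{t,\nu}M(\partial_{t,\nu})+A$ equals the limit operator displayed in the theorem. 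Feeding $\tilde{J}_n=\iota_n$ and $P_n=\pi_n\restriction_{\dom(A)}$ into the conclusion of \Cref{thm:convergence_spatial} then yields exactly the asserted convergence in the strong operator topology on $L\bigl(L_{2,\nu}(\R,H^1_{\{0\}}(\Omega)\times H^1_{\{1\}}(\Omega)),L_{2,\nu}(\R,L_2(\Omega)\times L_2(\Omega))\bigr)$, completing the proof.
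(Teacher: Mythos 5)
Your overall route is the same as the paper's: \Cref{thm:convergence_heatequation_1d} is indeed meant to be read off as a direct application of \Cref{thm:convergence_spatial} to the data verified in \Cref{ex:Spectr1DMixedBC}, including your identification of $\partial_{t,\nu}M_n(\partial_{t,\nu})+A_n$ with the displayed block operators (which is exactly the lifting mechanism of \Cref{prop:isem23_13.1.2}). There is, however, one concrete error in your verification of the hypotheses: the claim that one may take $\nu_0=0$. Condition~\labelcref{eq:CoerciveMnAndMmeansStability} in \Cref{hyp:Pn} demands a \emph{single} constant $c>0$ such that $\Re\sp{h}{zM(z)h}_H\geq c\norm{h}_H^2$ for \emph{all} $z\in\C$ with $\Re z>\nu_0$. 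Here $\Re\sp{(x,y)}{zM(z)(x,y)}_H=\Re z\norm{x}^2+\norm{y}^2$, so the best constant for fixed $z$ is $\min\{\Re z,1\}$, and its infimum over the half-plane $\C_{\Re>0}$ is $0$: no admissible $c$ exists when $\nu_0=0$, so neither \Cref{hyp:Pn} nor \Cref{thm:convergence_spatial} can be invoked with that choice. This is precisely why~\labelcref{eq:Coercivity1DHeatEqOneSidedBD} is only asserted on $\Re z\geq\nu$ with the $\nu$-dependent constant $\min\{\nu,1\}$.

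The repair is one line and leaves the rest of your argument untouched: given the theorem's $\nu>0$, fix $\nu_0\in(0,\nu)$, say $\nu_0=\nu/2$. Then $\sbb(M)=\sup_{n\in\N}\sbb(M_n)=0\leq\nu_0$; applying~\labelcref{eq:Coercivity1DHeatEqOneSidedBD} with $\nu_0$ in place of $\nu$ yields~\labelcref{eq:CoerciveMnAndMmeansStability} on $\C_{\Re>\nu_0}$ with $c=\min\{\nu_0,1\}$, for $M$ and all $M_n$ alike; the bound $\norm{zM_n(z)}\leq\max\{\abs{z},1\}$ gives~\labelcref{eq:Mn_uniformly_bounded}; and $\nu>\nu_0$ as \Cref{thm:convergence_spatial} requires. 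One further small inaccuracy: the invariance $(zM(z)+A)^{-1}(D)\subseteq D$ does not rest on $A$ being boundedly invertible; it follows simply because $(zM(z)+A)^{-1}$ maps $H$ onto $\dom(A)=D$ (invertibility coming from \Cref{lem:wp_easy}). With these corrections your proof is complete and coincides with the paper's intended argument.
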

\begin{remark}
In order to discuss finite element methods in the setting of \cref{sec:SpatialApproximation}, we would need to work with the following spaces.

 For $k\in\N_0$ we write $\mathcal{P}_k$ for the vector space of polynomials from $\R$ to $\C$ of degree at most $k$.
 For $n\in\N$ let $h\coloneqq\frac{1}{n}$ and consider the partition $x_0<x_1<\ldots<x_n$  of $[0,1]$ with $x_j\coloneqq\frac{j}{n} = jh$ for all $j\in\{0,\ldots,n\}$, and let
 \begin{align*}
   S_n^1(\Omega) & \coloneqq\set{f\in L_2(\Omega):\; f\restriction_{(x_j,x_{j+1})}\in\mathcal{P}_1 \quad(j\in\{0,\ldots,n-1\})},\\
    S_n^{1,0}(\Omega) & \coloneqq\set{f\in L_2(\Omega):\; f\in C(\overline{\Omega}),\, f\restriction_{(x_j,x_{j+1})}\in\mathcal{P}_1 \quad(j\in\{0,\ldots,n-1\})},\\
    S_{n,0}^{1,0}(\Omega) & \coloneqq\set{f\in L_2(\Omega):\; f\in C(\overline{\Omega}),\, f\restriction_{(x_j,x_{j+1})} \in\mathcal{P}_1\quad(j\in\{0,\ldots,n-1\}),\, f(0) = f(1)=0}
 \end{align*} 
 be the vector spaces of (globally continuous) linear splines (with zero boundary conditions).
Then, $S_n^1(\Omega)$,  $S_{n}^{1,0}(\Omega)$, and $S_{n,0}^{1,0}(\Omega)$ are finite-dimensional subspaces of $L_2(\Omega)$ and hence Hilbert spaces. Note that we even have $S_n^{1,0}(\Omega)= H^1(\Omega)\cap S_n^1(\Omega)$ and $S_{n,0}^{1,0}(\Omega)= H^1_0(\Omega)\cap S_n^1(\Omega)$.

Unfortunately, there is no canonical way to define $A_n$ since a mere restriction of, e.g., $\partial_x$ (as used
in finite element methods) would read $\partial_{x,n}\colon S_n^{1,0}(\Omega)\subseteq S_n^1(\Omega)\to S_n^1(\Omega)$. This mapping clearly fails to be skew-selfadjoint or even densely defined. In fact, there seems to be no readily obtainable and sensible way to define
$A_n$ on these spaces; even when we are not asking for skew-selfadjointness.
\end{remark}
\begin{example}
If we just change the boundary conditions in \Cref{ex:Spectr1DMixedBC} to periodic ones, i.e., we consider
\[\Bigl(\partial_{t,\nu} \begin{pmatrix}1 & 0\\0 & 0\end{pmatrix} + \begin{pmatrix}0 & 0\\0 & 1\end{pmatrix} + \begin{pmatrix} 0 & \partial^{\#}_x\\\partial^{\#}_x & 0\end{pmatrix}\Bigr) \begin{pmatrix}\theta\\q\end{pmatrix} = \begin{pmatrix} Q\\0\end{pmatrix}\text{,}\]
where $\partial^{\#}_x$ is the restriction of the usual weak derivative from $H^1(\Omega)$ to $H^1_{\#}(\Omega)\coloneqq\{f\in H^1(\Omega) \mid f(0)=f(1)\}$, then we can obtain a result similar to \Cref{thm:convergence_heatequation_1d}. By virtue of the Rellich--Kondrachov theorem and the skew-selfadjointness of $A$, $\ran(A)=\ker(A)^\perp$ is closed with a compact resolvent $(A\restriction_{\ran(A)})^{-1}\in L(\ran(A))$. Thus, we can account for $H_0\coloneqq\ker(A)=\ls \{(1,0),(0,1)\}$ by adding a corresponding orthonormal basis $\varphi_0,\varphi_{-1}\in \ker(A)$ and introducing the orthogonal projection $\pi_0$, the canonical
embedding $\iota_0$, and the arising $P_0,J_0,\tilde{J}_0$, as well as, for each $n\in\N$, including $\varphi_0,\varphi_{-1}$ in the
construction of $\pi_n$, $\iota_n$, $H_n$, $P_n$, $J_n$, and $\tilde{J}_n$ in \Cref{ex:Spectr1DMixedBC}.
\end{example}
\begin{example}
If we consider our original model problem~\eqref{eq:HeatEqEvolEq}, i.e., the heat equation with Dirichlet boundary conditions on some open and bounded $\Omega\subseteq\R^d$,
 \begin{equation*}
 \Bigl(\partial_{t,\nu} \begin{pmatrix}1 & 0\\0 & 0\end{pmatrix} + \begin{pmatrix}0 & 0\\0 & 1\end{pmatrix} + \begin{pmatrix} 0 & \div\\\grad_0 & 0\end{pmatrix}\Bigr) \begin{pmatrix}\theta\\q\end{pmatrix} = \begin{pmatrix} Q\\0\end{pmatrix}\text{,}
 \end{equation*}
 then we can once again obtain a result similar to \Cref{thm:convergence_heatequation_1d}.
 By virtue of the Rellich--Kondrachov theorem $H^1_0(\Omega)$ compactly embeds into $L_2(\Omega)$.
 Therefore, we can also deduce that $\dom(\div)\cap\ker(\div)^\perp$ compactly embeds into $L_2(\Omega)^d$.
 This and the skew-selfadjointness of $A$ imply $\ran(A)=\ker(A)^\perp$ is closed with a compact resolvent $(A\restriction_{\ran(A)})^{-1}\in L(\ran(A))$. Thus, we can account for $\ker(A)$ by adding a corresponding
 orthonormal basis $\varphi_0,\varphi_{-1},\dots\in \ker(A)$, introducing the orthogonal projection $\pi_0$, the canonical
embedding $\iota_0$, and the arising $P_0,J_0,\tilde{J}_0$ corresponding to $H_0\coloneqq\ls\{\varphi_0\}$ as well as, for each $n\in\N$,
adding $\varphi_0,\dots,\varphi_{-n}$ in the
construction of $\pi_n$, $\iota_n$, $H_n$, $P_n$, $J_n$, and $\tilde{J}_n$ in \Cref{ex:Spectr1DMixedBC}.
\end{example}
\begin{remark}
In finite element methods, stability under the differential operators of the projections onto the elements is a
key property that needs to be satisfied. We shall mention the question when the $L_2$-projection is $H^1$-stable (see, e.g., \cite[Folgerungen II.7.8, II.7.9]{Braess2007}) for the gradient and the properties of the Raviart--Thomas projection (see, e.g., \cite{GarciaJensen1997}, \cite[Minimaleigenschaft III.5.3]{Braess2007}, and
\cite[Theorem 4.1 and Theorem 5.1]{AcostaDuran1999}) for the divergence as prominent examples.
The analogue in \Cref{hyp:Pn} is $P_n\in L(D,\dom(A_n))$ for $n\in\N$. In the examples presented in this section, this means stability of the $L_2$-projections with respect to $\dom(A)$; and that immediately follows since we project onto eigenspaces of $A$, see~\eqref{eq:SpecTheAInv},~\eqref{eq:AAnStabilHProj},
and~\eqref{eq:AConvHProjSpecMeth}.
\end{remark}

\printbibliography

\end{document}